\newlist{myitemize}{itemize}{1}
\setlist[myitemize]{leftmargin=0.24in}
\newcounter{margnotes}
\def\sideremark#1{\ifvmode\leavevmode\fi\vadjust{\vbox to0pt{\vss 
			\hbox to 0pt{\hskip\hsize\hskip1em           
				\vbox{\hsize3cm\tiny\raggedright\pretolerance10000
					\noindent #1\hfill}\hss}\vbox to8pt{\vfil}\vss}}}%
\newcounter{lemenumi}
\newcommand{\labelemenumi}{(\alph{lemenumi})}
\newtheorem{theorem}{Theorem}[section]
\newtheorem{lemma}[theorem]{Lemma}
\newtheorem{proposition}[theorem]{Proposition}
\newtheorem{corollary}[theorem]{Corollary}
\theoremstyle{definition}
\newtheorem{definition}[theorem]{Definition}
\theoremstyle{remark}
\newtheorem{remark}[theorem]{Remark}
\numberwithin{equation}{section}
\newcommand{\Mon}{\mathrm{Mon}}
\newcommand{\Var}{\mathrm{Var}}
\newcommand{\OO}{\mathcal{O}}
\begin{document}
	\newcommand{\xc}{\xi}
	\newcommand{\yc}{\eta}
	\newcommand{\ep}{\varepsilon}
	\newcommand{\jb}{II}
	\newcommand{\df}{d}
	\newcommand{\al}{\alpha}
	
	\newcommand{\testrat}{R}
	\newcommand{\testpol}{P}
	
	\newcommand{\pn}{Q}
	\newcommand{\qn}{Q_1}
	\newcommand{\mon}{\mathcal{M}on}
	\newcommand{\ve}{\varepsilon}
	\newcommand{\var}{\mathcal{V}ar}
	\newcommand{\org}{T}
	\newcommand{\orgg}{t}
	\newcommand{\supp}{\text{supp}}
	\newcommand{\R}{\mathbb{R}}
	\newcommand{\C}{\mathbb{C}}
	\newcommand{\I}{\mathbb{I}}
	\newcommand{\N}{\mathbb{N}}
	\newcommand{\Z}{\mathbb{Z}}
	\newcommand{\Q}{\mathbb{Q}}
	\newcommand{\Xbar}{\bold{X}}
	
	
	\newcommand{\edg}{\gamma^0}
	\newcommand{\vtx}{p}
	\newcommand{\CH}{{\mathbb{C}} H_1({\mathcal{O}})}
	\newcommand{\CHy}{{\mathbb{C}} H_1({\mathcal{O}_y})}
	
		\title[Infinite Orbit depth and length of Melnikov functions]{Infinite 
		Orbit depth \\and\\ length of Melnikov functions}
	
\author[P. Marde\v si\'c]{Pavao Marde\v si\'c}
\address{Universit\'e de Bourgogne, Institute de 
	Math\'ematiques de Bourgogne - UMR 5584 CNRS\\
	Universit\'e de Bourgogne-Franche-Comt\'e,
	9 avenue Alain Savary,
	BP 47870, 21078 Dijon\\France}
\email{mardesic@u-bourgogne.fr}

\author[D. Novikov]{Dmitry Novikov}
\address{Faculty of Mathematics and 
	Computer Science, Weizmann Institute of Science, Rehovot, 7610001 Israel}
\email{dmitry.novikov@weizmann.ac.il}

\author[L. Ortiz-Bobadilla]{Laura Ortiz-Bobadilla} 
\address{Instituto de Matem\'aticas, Universidad Nacional Aut\'onoma de 
	M\'exico 	(UNAM), 	\'Area de la Investigaci\'on Cient\'ifica, Circuito 
	exterior, Ciudad 	Universitaria, 04510, Ciudad de M\'exico, M\'exico}
\email{laura@matem.unam.mx}

\author[J. Pontigo-Herrera]{Jessie Pontigo-Herrera}
\address{Faculty of Mathematics and 
	Computer Science, Weizmann Institute of Science, Rehovot, 7610001 Israel}
\email{jessie-diana.pontigo-herrera@weizmann.ac.il}

\thanks{This research was supported by the ISRAEL SCIENCE FOUNDATION
	(grant No. 1167/17), 
	UNAM PREI Dgapa, 
	Unidad Mixta Internacional Laboratorio Solomon Lefschetz (LASOL), FONCICYT, 
	Papiit Dgapa UNAM IN106217, ECOS Nord-Conacyt 
	249542 
	and Conacyt 291231}

\subjclass{34C07 (primary), 34C05, 34C08 (secondary)}
\keywords{Iterated integrals, Center problem}
\date{\today}
	\begin{abstract}
		In this paper we study polynomial Hamiltonian systems $dF=0$ in the 
		plane and their small perturbations: $dF+\epsilon\omega=0$. The first 
		nonzero Melnikov function $M_{\mu}=M_{\mu}(F,\gamma,\omega)$ of the 
		Poincar\'e map along a loop $\gamma$ of $dF=0$ is given by an iterated 
		integral \cite{G}. In \cite{MNOP}, we bounded the length of the 
		iterated integral $M_\mu$ by a geometric number $k=k(F,\gamma)$ which 
		we call \emph{orbit depth}. 	
		We conjectured that the bound is 
		optimal. 
		
		Here, we give a simple example of a Hamiltonian system $F$ and its 
		orbit $\gamma$ having  infinite orbit depth. 
		If our conjecture is true, for this example there should exist 
		deformations $dF+\epsilon\omega$ with arbitrary high length first 
		nonzero Melnikov function $M_\mu$ along $\gamma$.	
		We construct deformations $dF+\epsilon\omega=0$ whose first nonzero 
		Melnikov function $M_\mu$ is of length three and explain the 
		difficulties in constructing deformations having high length first 
		nonzero Melnikov functions $M_\mu$. 
	\end{abstract}
	\maketitle
	
	\section{Introduction and Main Results}

This paper is motivated by two classical problems in the study of orbits of 
vector fields in the plane: the 16-th Hilbert problem and the center problem or 
rather their infinitesimal versions. 

The Infinitesimal Hilbert 16-th  problem asks for a bound on the number of 
limit 
cycles (i.e. isolated periodic orbits)  created by a small polynomial 
deformation  of a given degree of an integrable vector field in the plane. 

The infinitesimal center problem asks for a characterization of polynomial 
deformations of an integrable system which preserve  a family of loops. 

In both problems one studies the Poincar\'e first return map \eqref{P} on a 
transversal.
The first (possibly) nonzero term $M_\mu$ carries lots of information about the 
Poincar\'e map. Having an a priori estimate on its complexity would be very 
important for both infinitesimal problems. For the infinitesimal center problem,
to have an a priori estimate on the length is similar to having an estimate on 
the stabilization index for Noetherian property.

It is known \cite{F, G}, that when deforming a Hamiltonian vector field, the 
first nonzero  term $M_\mu$ is an iterated integral of length not exceeding its 
order $\mu$. However, the order  $\mu$ in general depends on the deformation. 
In \cite{MNOP}, we gave a bound on the length of $M_\mu$ by a geometric number 
\emph{orbit depth} $k$ which is independent on the deformation.   We showed 
that in different cases this bound is optimal and we conjectured that it is so 
in general.

\bigskip
In this paper we give an example where this bound is infinite. We believe that 
in the example one can construct deformations whose first nonzero Melnikov 
function $M_\mu$ is of arbitrarily high length. In that direction we construct 
for our example deformations having first nonzero Melnikov function $M_\mu$ of 
length $3$ and show the difficulties in constructing deformations with higher 
length. 

\begin{remark}
 \begin{myitemize}\hfill	
\item[(i)]	 Our example answers  negatively a question asked by Gavrilov and 
Iliev in \cite{GI}.
\item[(ii)]	 Our example shows the complexity of both infinitesimal problems. 
\end{myitemize}
\end{remark}
\bigskip

Let us be more 
precise. Let $F\in\C[x,y]$ be a polynomial and let  $\gamma\in \pi_1(F^{-1}(t))$ be a 
loop for $t$ a regular value of $F$.
Consider a small polynomial deformation
\begin{equation}\label{def}
dF+\epsilon\omega=0,
\end{equation}
of the Hamiltonian $dF=0$.
Let $\tau$ be a transversal section to $\gamma$ at a point $p_0$, parametrized 
by the values $t$ of $F$. Denote by  $P_\gamma$ the Poincar\'e return map 
(holonomy)  of 
\eqref{def} along $\gamma$. 
Then 
\begin{equation}\label{P}
P_\gamma(t)=t+\epsilon^\mu M_{\gamma,\mu}(t)+o(\epsilon^\mu).
\end{equation}

If the Poincar\'e map is not the  identity map, we assume that $M_\mu$ is 
nonzero and call it the \emph{first non-zero Melnikov function along $\gamma$ 
of the deformation \eqref{def}}. 

By the Poincar\'e-Pontryagin criterion, the first order  Melnikov function 
$M_1$ is given 
by an Abelian integral,
\begin{equation}
M_{\gamma,1}(t)=\int_\gamma\omega.
\end{equation}
More generally, $M_{\gamma,\mu}(t)$ is given as a linear combination of  iterated 
integrals of 
length at most $\mu$, see \cite{F, G}. However, this bound in general is not 
optimal.
For instance, for generic $F$ and any loop $\gamma$ and any deformation 
$\omega$, the first non-zero Melnikov functions $M_{\gamma,\mu}(t)$ is given by an 
Abelian 
integral (i.e. is an iterated integral of length $1$), irrespective of its 
order $\mu$. This follows from 
\cite{I,F}, see \cite{MNOP}. For other examples see \cite{MNOP}, as well as 
papers cited there. Moreover, the bound $\mu$, for the length of $M_{\gamma,\mu}$ 
depends on the deformation \eqref{def}.

In 
\cite{GI} a sufficent condition under which  the first nonzero Melinkov 
function $M_{\gamma,\mu}(t)$ is an Abelian integral
is formulated. We generalized this condition in \cite{MNOP}:
\medskip

Let $\Sigma$ be the set of atypical values of $F$, see \cite{HL}, and let 
$t\not\in\Sigma$ be some regular value of $F$. 
Denote $\Gamma_t=\{F^{-1}(t)\}$.
The fundamental group $\pi_1(\C\setminus\Sigma,t)$ 
acts on the fundamental group $\pi_1(\Gamma_t,p_0)$ as follows.  
For each generator $a_j$ of $\pi_1(\C\setminus\Sigma,t)$ corresponding to a 
closed 
curve $a_j(s)\subset\C\setminus\Sigma$, choose its  lifting $\tilde{a}_j(s)$, 
i.e. a loop $\tilde{a}_j(s)\subset 
F^{-1}(\C\setminus\Sigma)$ such that $F(\tilde{a}_j(s))=a_j(s)$ and 
$\tilde{a}_j(0)=\tilde{a}_j(1)=p_0$. Then, by Ehresmann's fibration theorem, 
the 
fundamental 
groups $\pi_1(F^{-1}({a}_j(s)), \tilde{a}_j(s))$ and $ 
\pi_1(F^{-1}({a}_j(s'))), 
\tilde{a}_j(s'))$ are canonically isomorphic for sufficiently close $s,s'$. 
This defines an automorphism $Mon(a_j)$ of $\pi_1(\Gamma_t,p_0)$ and the 
representation $Mon:\pi_1(\C\setminus\Sigma,t)\to Aut(\pi_1(\Gamma_t,p_0))$. 
This representation depends on the  choice of the liftings $\tilde{a}_j$, and 
different choices of liftings change $Mon(a_j)$ to conjugate automorphisms 
$\sigma_j^{-1} Mon(a_j)\sigma_j$, $\sigma_j\in\pi_1(\Gamma_t,p_0)$. We fix some 
choice of 
$\tilde{a}_j$.

\begin{definition}[see \cite{GI,MNOP}]
	Let $\OO$ be the smallest normal subgroup of $\pi_1(\Gamma_t,p_0)$ 
	containing the  orbit of $\gamma\in \pi_1(\Gamma_t,p_0)$ under 
	the action of $Mon(\pi_1(\C\setminus\Sigma,t)) $. Denote 
	$K=[\OO,\pi_1(\Gamma_t,p_0) ]$ and let $H_1(\OO)=\OO/K$.
\end{definition}
\begin{remark}
	Note that $\OO$, $K$ and $H_1(\OO)$ are independent on the particular 
	choice of $\tilde{a}_j$. Moreover, $H_1(\OO)$ is canonically isomorphic for 
	different  choices
	of $p_0$ in the following sense: the natural isomorphism  between 
	$\pi_1(\Gamma_t,p_0)$ and $\pi_1(\Gamma_t,p'_0)$ defined by a path joining 
	$p_0$ and $p'(0)$ descends to an isomorphism of the corresponding 
	$H_1(\OO)$, 
	and 
	this isomorphism is independent on the choice of this path.
\end{remark}

In what follows, we denote $\pi_1(\Gamma_t,p_0)$ by $\pi_1$. The \emph{lower 
	central sequence of $\pi_1$} is defined as:  
\begin{equation}
\pi_1=L_1\supset L_2=[L_1,\pi_1]\supset\cdots \supset 
L_{i+1}=[L_i,\pi_1]\supset\cdots
\end{equation}

There is a natural homomorphism $\iota_1:H_1(\OO)\to  H_1(\Gamma_t,\C)$, 
and let $\OO_1=\iota_1(H_1(\OO))=\frac{\OO L_2}{ 
	L_2}\otimes\C$. In general, $\iota_1$ is neither surjective 
nor 
injective. In \cite{GI} it is shown that if $\iota_1$ is injective then 
$M_\mu(t)$ is an Abelian integral.

In \cite{MNOP}, we defined the \emph{orbit depth} $k=k(F,\gamma)$,
\begin{definition} Given a polynomial $F\in\C[x,y]$ and a loop 
$\gamma\in\pi_1$ as above,  the orbit depth 
$k=k(F,\gamma)$ is defined as 
	\begin{equation}\label{k}
	k=\sup\,\left\{j\ge 1\,\left\vert\,\OO\cap 
		L_{j}\not\subseteq K\right.\right\}\subset\N\cup\left\{+\infty\right\}.
	\end{equation}
	
 We say that an element $v\in \OO$ is of depth $j$ if it belongs to $L_j$ and 
 its class 	in $H_1(\OO)$ is nonzero.
	Orbit depth is $k<\infty$ if $k$ is the highest depth of elements in $\OO$, 
	and it is infinite if there are elements of $\OO$ of arbitrary high depth. 
	
\end{definition}

In
\cite[Theorem 1.7]{MNOP}) we proved that the orbit depth $k=k(F,\gamma)$ 
bounds the 
length of iterated integrals representing the 
first nonzero Melnikov function $M_{\gamma,\mu}$ of small deformations 
\eqref{def}. 

We 
conjectured that it was an optimal 
bound for the length of the first nonzero Melnikov function $M_{\gamma,\mu}$ 
along 
$\gamma$ of deformations of $dF=0$. 
We hence believe that for a Hamiltonian system $dF=0$  and a loop 
$\gamma\in\pi_1(F^{-1}(t))$ of infinite orbit depth
there exist polynomial deformations \eqref{def} such that the first non-zero 
Melnikov function $M_{\gamma,\mu}$ is an iterated integral of arbitrary high 
length.

\begin{theorem}\label{thm:depth}
	There exists a polynomial function $F\in\R[x,y]$ and a loop 
	$\gamma\in\pi_1(F^{-1}(t))$
	such that the orbit depth $k$ of $\gamma$ is infinite.
	
	Such an example is given by
	\begin{equation}\label{F}	
	F(x,y)=(x^2-1)(y^2-1),
	\end{equation} and the loop $\gamma\subset 
	\{F=t\}$ 
	given by the real cycle vanishing at $(0,0)$ along the path 
	$(0,t)\subset\R$, for $t\in(0,1)$ (see Figure 1). 
\end{theorem}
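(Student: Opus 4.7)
The plan is to identify $\pi_1=\pi_1(F^{-1}(t))$ concretely, compute the monodromy orbit of $\gamma$, and exhibit, for each $k\ge 1$, an element of $\mathcal{O}\cap L_{k+1}$ whose class in $H_1(\mathcal{O})$ is nonzero. First I would describe the fibre. For $t\in(0,1)$ the curve $F^{-1}(t)$ is smooth with $\chi=-4$ (its degree-four projective closure has genus one after resolving the two nodes at infinity, minus four points at infinity), so $\pi_1$ is free of rank five. A convenient presentation uses the projection $(x,y)\mapsto x$, which exhibits $F^{-1}(t)$ as a double cover of $\bbC\setminus\{\pm 1\}$ branched over $\pm\sqrt{1-t}$. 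This embeds $\pi_1$ as the parity kernel inside $F_4=\pi_1(\bbC\setminus\{\pm1,\pm\sqrt{1-t}\})$ with standard loops $\mu_1,\mu_2$ around $\pm 1$ and $\nu_1,\nu_2$ around $\pm\sqrt{1-t}$; the real vanishing cycle is $\gamma=\nu_1\nu_2$. The only critical values are $0$ and $1$ and the fibre topology at infinity is constant, so $\Sigma=\{0,1\}$.

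Next I would compute the monodromy. The generator $\mathrm{Mon}_1$ is the Dehn twist along $\gamma$ and fixes $\gamma$; the generator $\mathrm{Mon}_0$ is the braid on $\pi_1$ produced by the branch points $\pm\sqrt{1-t}$ performing full loops around $\pm 1$ as $t$ encircles $0$. Passing to the generators $x_i=\mu_i\nu_i$, $y_i=\nu_i$ this becomes $\mathrm{Mon}_0(y_1)=x_1^{-1}y_1x_1$, $\mathrm{Mon}_0(y_2)=x_2y_2x_2^{-1}$, with $x_1,x_2$ fixed, and iterating gives the clean formula
\[
\gamma_n:=\mathrm{Mon}_0^n(\gamma)=x_1^{-n}y_1x_1^n\cdot x_2^{\,n}y_2x_2^{-n}\in\mathcal{O},\qquad n\in\bbZ.
\]
Via the Magnus embedding of $F_4$ into $\bbZ\langle\langle X_1,X_2,Y_1,Y_2\rangle\rangle$, the Lie logarithm $\log\gamma_n$ is a polynomial in $n$ whose coefficient of $n^k$ contains, at leading order, the degree-$(k+1)$ Lie monomials $(-1)^k(\mathrm{ad}\,X_1)^kY_1$ and $(\mathrm{ad}\,X_2)^kY_2$. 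Taking suitable iterated group-theoretic finite differences of $\{\gamma_n\}_n$ extracts these coefficients and produces explicit elements $v_k\in\mathcal{O}\cap L_{k+1}$ for every $k\ge 1$.

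The principal difficulty is to verify $v_k\notin K=[\mathcal{O},\pi_1]$, so that $v_k$ represents a nonzero class in $H_1(\mathcal{O})$: since $\pi_1$ acts trivially on $\mathcal{O}/K$ by construction, one cannot simply track conjugacy. My plan is to pass to the nilpotent quotient $\pi_1/L_{k+2}$; there any element of $\overline{K}$ at depth $k+1$ arises from a single bracket $[m,g]$ with $m\in\overline{\mathcal{O}}\cap L_k$ and $g\in\overline{\pi_1}$, so it suffices to check, by a direct free Lie algebra computation, that the leading Lie monomial of $v_k$ is not in the image of $[\,\cdot\,,\pi_1]$ applied to $\mathcal{O}$-elements of depth at most $k$. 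Equivalently, one may identify $H_1(\mathcal{O})$ with the Galois coinvariants of $H_1$ of the covering of $F^{-1}(t)$ determined by $\mathcal{O}$ and verify that each depth-$(k+1)$ generator survives.
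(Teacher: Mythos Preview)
Your overall architecture matches the paper's: produce elements $v_k\in\mathcal{O}\cap L_{k+1}$ by iterating the monodromy, then argue $v_k\notin K$. But there are two gaps, the second of which is the real obstacle.

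First, your presentation of $\pi_1$ as the parity kernel of $F_4$ is off. All four loops $\mu_1,\mu_2,\nu_1,\nu_2$ have nontrivial $\bbZ/2$-monodromy in the double cover (over $x=\pm 1$ the two sheets are exchanged, since $y\mapsto -y$ as $x$ winds around $\pm1$), so the parity kernel is an index-two subgroup of $F_4$ and hence free of rank $7$, not $5$. To get the correct group you must further kill $\nu_1^2,\nu_2^2$, which bound discs upstairs. Your formula for $\gamma_n$ and the Magnus expansion would need to be redone in this quotient. The paper avoids this entirely by working with the five geometric generators $\gamma,\delta_0,\dots,\delta_3$ on the fibre itself.

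Second, and more seriously, your reduction for the step $v_k\notin K$ does not hold. You assert that in $\pi_1/L_{k+2}$ every element of $\overline{K}$ of depth $k+1$ arises from a \emph{single} bracket $[m,g]$ with $m\in\overline{\mathcal{O}}\cap L_k$. But elements of $K=[\mathcal{O},\pi_1]$ are products of commutators, and in the associated graded the degree-$(k{+}1)$ piece of $K$ is the span of $[\bar m,\bar g]$ over \emph{all} splittings $j+(k{+}1{-}j)$, with $\bar m$ the degree-$j$ image of some element of $\mathcal{O}$ and $\bar g$ of degree $k{+}1{-}j$. So you would first have to determine the image of $\mathcal{O}$ in every degree $\le k$, and then show that your candidate avoids the full Lie ideal these generate. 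You give no mechanism for this, and the promised ``direct free Lie algebra computation'' has no uniform shape in $k$.

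The paper handles this step by a completely different device. Having shown that $\mathcal{O}$ is normally generated by $\gamma$ together with explicit elements $v_i=[\delta_1\delta_2,\,[\delta_2,[\dots,[\delta_2,\delta_2\delta_3]\dots]]]\in L_i$ (so that $K$ is generated by $[\pi_1,\gamma]$ and the $[\pi_1,v_i]$), the authors construct for each $k$ a representation $\rho_k:\pi_1\to GL(2^k,\bbC)$ depending on parameters $a,c$, built from tensor powers of $2\times 2$ blocks, with the property that $\rho_k(v_i)=\mathbb{I}$ for all $i\neq k+2$ while $\rho_k(v_{k+2})=\mathbb{I}+\lambda\,J_2^{\otimes k}$ is a nontrivial unipotent. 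Then $\rho_k(K)$ collapses to commutators of $\rho_k(\pi_1)$ with this one matrix, and an elementary calculation shows $\rho_k(v_{k+2})\in\rho_k(K)$ forces an algebraic relation on $a,c$ that fails generically. This sidesteps any free Lie algebra bookkeeping; your proposal is missing an analogue of this certificate.
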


\medskip
Our theorem also 
answers negatively to the question if $dim H_1(\OO)\le dim 
H_1(F^{-1}(t_0))$, which was raised as part of open question (1) in \cite{GI}.

\bigskip

We also prove 

\begin{theorem}\label{thm:M3} There exist a rational deformation 
$dF+\epsilon\omega$ of 
$F$ given by \eqref{F}
	such that the first nonzero Melnikov function $M_{\gamma,\mu}$ of the 
	deformation 
	\eqref{def} is an iterated integral of length $3$.
	An example of such deformation is a form $\omega$ of type
	\begin{equation}\label{eq:omega type}
	\omega=a_1(F)\frac{dx}{x+1}+ 
	a_2(F)\frac{dy}{y-1}+a_3(F)\frac{dx}{x-1},
	\end{equation}
	with $a_1(t)=t^2+2t, a_2(t)=t$ and $a_3(t)=t^2+t$.
	
	If $M_{\gamma,2}=M_{\gamma,3}\equiv0$ for deformation
	\eqref{def} with $\omega$ as \eqref{eq:omega type}, then the  deformation 
	is integrable.
\end{theorem}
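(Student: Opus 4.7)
The plan is to exploit the logarithmic structure of $\omega$. Setting $\ell_1 = \ln(x+1)$, $\ell_2 = \ln(y-1)$, $\ell_3 = \ln(x-1)$, we have $\omega = \sum_{i=1}^{3} a_i(F)\,d\ell_i$. Since $\gamma$ lies inside the box $\{|x|<1,\,|y|<1\}$ where none of the linear factors $x\pm 1$, $y\pm 1$ vanish, each $\ell_i$ is single-valued in a neighborhood of $\gamma$. Consequently $\int_\gamma d\ell_i = 0$ for every $i$, so $M_{\gamma,1}(t)\equiv 0$ holds automatically for any choice of coefficients $a_i(t)$.

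To handle $M_{\gamma,2}$, I would apply Gavrilov's iterative relative de Rham decomposition. The identity $a_i(F)\,d\ell_i = d(a_i(F)\ell_i) - a_i'(F)\,\ell_i\,dF$ yields $\omega = dQ_1 + g_1\,dF$ with $Q_1 = \sum a_i(F)\ell_i$ and $g_1 = -\sum a_i'(F)\ell_i$. Both $Q_1, g_1$ are single-valued on $\gamma$ since the $\ell_i$ are, and one obtains
\[
M_{\gamma,2}(t) \;=\; \pm\sum_{i<j}\bigl(a_i'(t)a_j(t) - a_j'(t)a_i(t)\bigr)\,A_{ij}(t), \qquad A_{ij}(t) = \int_\gamma \ell_i\,d\ell_j.
\]
The key computational input is the $\Z_2\times\Z_2$ symmetry of $F$ acting on $\gamma$: the rotation $(x,y)\mapsto(-x,-y)$ preserves $\gamma$ with its orientation and sends $\ell_1 \leftrightarrow \ell_3$ up to the constant $i\pi$, forcing $A_{13}(t)\equiv 0$; the reflection $(x,y)\mapsto(-x,y)$ reverses the orientation of $\gamma$ and again swaps $\ell_1$ with $\ell_3$ modulo a constant, giving $A_{12}(t)\equiv A_{23}(t)$. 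A direct computation with the chosen coefficients yields $a_1'a_2 - a_2'a_1 = t^2$, $a_1'a_3 - a_3'a_1 = -t^2$, $a_2'a_3 - a_3'a_2 = -t^2$, so $M_{\gamma,2}(t) = \pm t^2(A_{12} - A_{13} - A_{23}) \equiv 0$.

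For $M_{\gamma,3}$, the next step of the iteration writes $g_1\omega = dQ_2 + g_2\,dF$ with $g_2$ bilinear in the $\ell_i$'s, and $M_{\gamma,3}(t) = \pm\int_\gamma g_2\,\omega$ is a length-three iterated integral whose coefficients are the triples $A_{ijk}(t) = \int_\gamma \ell_i\ell_j\,d\ell_k$. The hard part is to show $M_{\gamma,3}\not\equiv 0$: the same $\Z_2\times\Z_2$ symmetries still impose linear relations among the $A_{ijk}$, and one must verify these do not force total cancellation for our particular $a_i$. The plan is to extract an asymptotic of $M_{\gamma,3}(t)$ as $t\to 1^{-}$, where $\gamma$ collapses onto $(0,0)$ and $F\simeq 1-(x^2+y^2)$. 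In this limit each $\ell_i$ admits a Taylor expansion at the origin, and the triple iterated integrals reduce to elementary contour integrals on a small circle, from which a nonvanishing leading-order term in $(1-t)$ can be read off explicitly.

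For the converse claim, the plan is to translate $M_{\gamma,2}\equiv 0$ and $M_{\gamma,3}\equiv 0$ into two algebraic differential identities on the triple $(a_1,a_2,a_3)$. Using the symmetry relations, $M_{\gamma,2}\equiv 0$ reduces to the Wronskian identity $W(a_3-a_1,\,a_2)\equiv 0$, which forces $a_3 - a_1 = \lambda\, a_2$ for some constant $\lambda$. Substituting this into the length-three expression produces a further polynomial constraint, and I would show that every common solution makes $\omega$ decompose as $dR + h\,dF$ with $R, h$ globally single-valued rational functions, so that $F + \epsilon R$ is a formal first integral of $dF + \epsilon\omega = 0$ and all higher Melnikov functions vanish automatically. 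The principal difficulty is the explicit integrability check: exhibiting the rational primitive $R$ in closed form for every admissible triple and verifying that the two algebraic conditions really exhaust the integrability locus.
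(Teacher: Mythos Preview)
Your treatment of $M_{\gamma,1}$ and $M_{\gamma,2}$ is fine; the $\Z_2\times\Z_2$ symmetry argument is a valid alternative to the paper's device (a Cauchy-type lemma: any iterated integral over $\gamma$ of forms depending on $x$ alone, or on $y$ alone, vanishes because the projection of $\gamma$ to either axis bounds in a simply-connected domain). The real divergence, and the gaps, are in the last two steps.

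For the claim that $M_{\gamma,3}$ has length~$3$, showing $M_{\gamma,3}\not\equiv0$ by an asymptotic at $t\to1^{-}$ is not enough: you must also rule out that $M_{\gamma,3}$ is an iterated integral of length $\le2$. The paper does both at once via monodromy. It introduces the commutator cycles $v_2=[\delta_1\delta_2,\delta_2\delta_3]$, $v_3=[\delta_1\delta_2,[\delta_2,\delta_2\delta_3]]\in\OO$ and proves a Baker--Campbell--Hausdorff--type formula: if $P_{\sigma_i}=t+\epsilon^{\mu_i}M_{\sigma_i,\mu_i}+\cdots$ then $M_{[\sigma_1,\sigma_2],\mu_1+\mu_2}=W(M_{\sigma_1,\mu_1},M_{\sigma_2,\mu_2})$. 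This yields $M_{v_3,3}=W\bigl(a_2,W(a_2-a_3,a_1-a_3)\bigr)$, which for the given coefficients equals (up to powers of $2\pi i$) $W(t,W(-t^2,t))=t^2\not\equiv0$. Since $M_{v_3,3}=\Var^3 M_{\gamma,3}$ and any iterated integral of length $\le2$ is killed by $\Var^3$ (equivalently, vanishes on $v_3\in L_3$), this simultaneously gives $M_{\gamma,3}\not\equiv0$ and length exactly~$3$. Your asymptotic plan does not touch the ``length $\ge3$'' half of the statement.

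For the converse, your key step is wrong: from $\omega=dR+h\,dF$ one does \emph{not} get that $F+\epsilon R$ is a first integral of $dF+\epsilon\omega=0$, because $(dF+\epsilon\omega)\wedge d(F+\epsilon R)=\epsilon^2 h\,dF\wedge dR$, which is generically nonzero. The paper's argument is different in kind. From $M_{v_2,2}=M_{v_3,3}=0$ one gets $a_1-a_3=\lambda_1 a_2$ and $W(a_1,a_3)=\lambda_2 a_2$; writing $A(t)=\int dt/a_2$ and $\lambda=\lambda_2/\lambda_1$, the foliation is orbitally equivalent to $dA(F)+\epsilon\bigl(A(F)\alpha+d\phi\bigr)=0$ with $\alpha=\lambda\,d\log(x^2-1)$. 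One then computes $M_{\gamma,3}$ for this reduced system by Fran\c{c}oise's algorithm (using the Cauchy-type lemma repeatedly) and finds it equals a nonzero multiple of $\lambda^{2}\int_\gamma d\phi_2\,d\phi_3$, which is shown nonzero by evaluating its second variation on $v_2$. Hence $M_{\gamma,3}\equiv0$ forces $\lambda=0$, and then $A(F)+\epsilon\phi$ is a genuine (not formal) first integral, proving integrability. Your outline misses both the orbital-equivalence reduction and the explicit computation that pins down $\lambda$.
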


We conclude that one needs a  richer set of 
deformations to get  an example of a perturbation with first nonzero Melnikov 
function $M_{\gamma,\mu}$ of length $\ge 4$.

\bigskip
One of the principal tools of the proof is Proposition~\ref{prop:BCHformula},
establishing connection between Poincar\'e  return maps of paths on $\Gamma_t$  
and the vector fields on the transversal $\tau$ whose flows give these maps.
\section{Example with infinite orbit depth}

We consider the polynomial $F(x,y)=(x^2-1)(y^2-1)$. The critical values of $F$ 
are $0$ and $1$, and the critical points are $(\pm 
1, \pm 1)$ on $\{F=0\}$ and $(0,0)$ at $\{F=1\}$.  Our goal in this section is 
to show that the orbit depth  of the real cycle $\gamma$ vanishing at the 
critical point $(0,0)$ is infinity.

The normalizations $\Gamma_t$ of complexifications of non-singular level curves  $\{F=t\}$, $t\not=0,1$, are 
torii  with $4$ points removed. 
The fundamental group $\pi_1(\Gamma_t, p_0)$ is a free group generated by loops 
$\gamma, \delta_0,\delta_1,\delta_2,\delta_3$, where $\delta_i$ are loops 
vanishing at $(\pm 1, \pm 1)$.

To be more precise, we take $0<t\ll 1$, choose $p_0$ close to the edge 
$\{x=-1\}$ of the square, and denote $\delta_0,\delta_1,\delta_2,\delta_3$ the  
geometric loops vanishing at $(-1,-1)$, $(1,-1)$, $(1,1)$ and $(-1,1)$ 
correspondingly, see the figure at \cite{MNOP}: we take a meridian of the 
cylinder which is $\{F=t\}$ near the corresponding singular point, with base 
point on $\gamma$,  and then pull the base point clockwise along $\gamma$ to 
$p_0$. We orient $\gamma$ counterclockwise, and orient $\delta_i$ in such a way that the intersection numbers $(\gamma, 
\delta_i)$ are all equal to one.

\begin{figure}[h]
	\begin{center}
		\includegraphics[height=5cm]{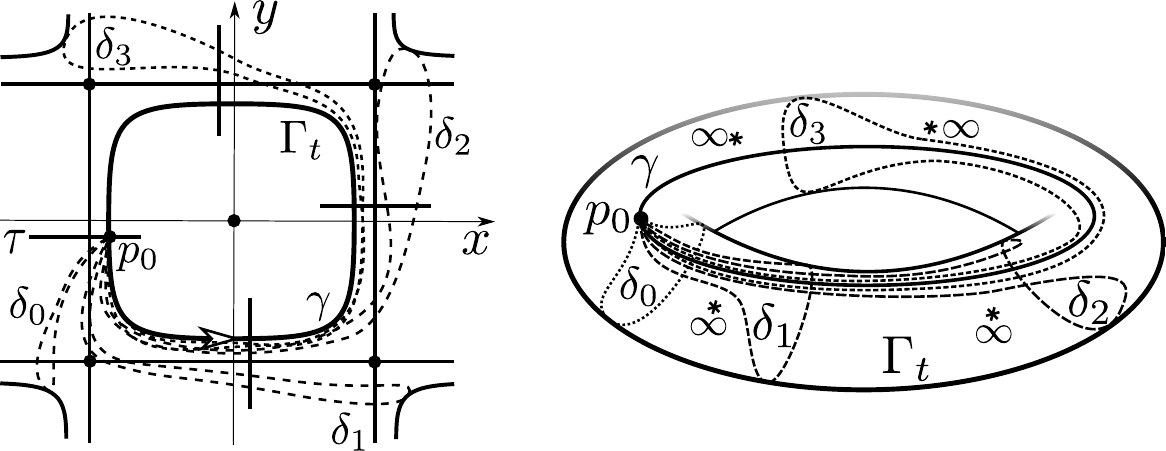}.
	\end{center}
	\caption{Generators of $\pi_1(\Gamma_t, p_0)$.}
\end{figure}
The atypical values of $H$ are exactly its critical values $0,1$. Therefore, 
the action of the monodromy of the 
foliation on the fundamental group of $\Gamma_t$ is generated 
by two automorphisms $Mon_0$ and $Mon_1$ of $\pi_1(\Gamma_t, p_0)$ 
corresponding to the loops going around the critical 
values $0$ and $1$ 
correspondingly, as described above.
\begin{figure}[h]
\begin{center}
	\includegraphics[height=2.5cm]{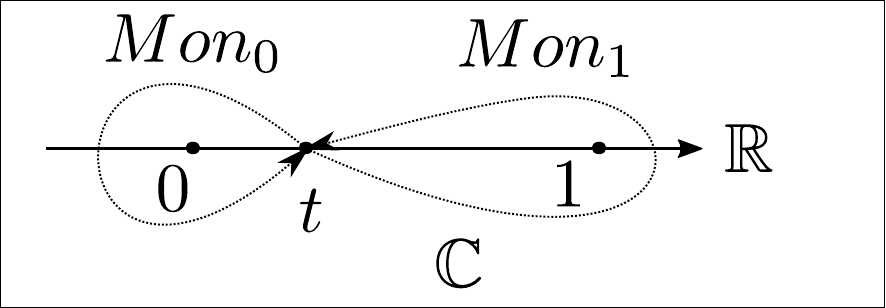}.
\end{center}
\caption{Generators of $\pi_1(\C\setminus\{0,1\}, t)$ corresponding to 
$Mon_0,Mon_1$.}
\end{figure}
\begin{lemma}\label{lem:monodromy}
 Denote $\delta=\delta_0\delta_1\delta_2\delta_3$. The monodromy operators 
 $Mon_{0,1}$ are	
\begin{align}
Mon_1=\{&\gamma\mapsto \gamma, \delta_i \mapsto \gamma\delta_i\}\\
Mon_0=\{&\gamma \mapsto\delta\gamma, \delta_0 \mapsto\delta_0, 
\nonumber\\
&\delta_1 \mapsto\delta_0\delta_1\delta_0^{-1}, \nonumber\\
&\delta_2 \mapsto\delta_0\delta_1\delta_2\delta_1^{-1}\delta_0^{-1},  
\nonumber\\
&\delta_3 
\mapsto\delta_0\delta_1\delta_2\delta_3\delta_2^{-1}\delta_1^{-1}\delta_0^{-1}\}
\end{align}
\end{lemma}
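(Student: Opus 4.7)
The plan is to realise $Mon_1$ and $Mon_0$ as products of Dehn twists around the vanishing cycles at the Morse critical points of $F$ in the respective singular fibres, and then to compute their action on the free basis $\{\gamma,\delta_0,\delta_1,\delta_2,\delta_3\}$ of $\pi_1(\Gamma_t,p_0)$ via a $\pi_1$-level Picard--Lefschetz analysis. Recall that for an oriented simple closed curve $v$ on a surface, $T_v$ fixes $v$ itself, and for a based loop $\alpha$ crossing $v$ transversally once with intersection $+1$, $T_v(\alpha)$ is a based loop of the form $v^{\pm 1}\alpha$ or $\alpha v^{\pm 1}$, with exponent and placement determined by the orientation conventions and by which side of $v$ the basing arc of $\alpha$ lies on.

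For $Mon_1$, over $t=1$ the only critical point of $F$ is $(0,0)$, with vanishing cycle $\gamma$, so $Mon_1=T_\gamma$. Since $\gamma\cdot\delta_i=1$ by assumption, the Picard--Lefschetz formula in the fundamental group directly yields $Mon_1(\gamma)=\gamma$ and $Mon_1(\delta_i)=\gamma\delta_i$, after fixing the standard sign conventions read off from Figure~1.

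For $Mon_0$, over $t=0$ the four Morse critical points at $(\pm 1,\pm 1)$ all share the critical value $0$, with vanishing cycles $\delta_0,\delta_1,\delta_2,\delta_3$. Perturbing $F$ to split these critical values and then following a small loop around the resulting cluster writes $Mon_0$ as a composition of the four Dehn twists $T_{\delta_j}$ in an order dictated by the braiding of the perturbed critical values. The nested conjugation pattern in the statement is consistent with $Mon_0 = T_{\delta_3}T_{\delta_2}T_{\delta_1}T_{\delta_0}$ together with the following elementary action of a single twist on a based generator: $T_{\delta_j}$ fixes $\delta_j$ and every $\delta_i$ with $i<j$; it conjugates each $\delta_i$ with $i>j$ by $\delta_j$; and it sends $\gamma$ to $\delta_j\gamma$. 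A short induction on $j$ then reproduces the stated formulas for $Mon_0$, and in particular yields $\gamma\mapsto\delta_0\delta_1\delta_2\delta_3\gamma=\delta\gamma$.

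The main obstacle is verifying this elementary rule for a single Dehn twist $T_{\delta_j}$. It reflects how the \emph{based} loops pass through the annular neighbourhood supporting the twist: $\gamma$ crosses that annulus transversally, $\delta_j$ is its core, the earlier $\delta_i$ lie entirely outside it, whereas the later $\delta_i$ have their basing arc pass through. To carry this out I would work directly with the square picture of Figure~1, label the annular neighbourhood of each vanishing cycle, and track how each based generator enters and exits them, using the explicit prescription for the $\delta_i$ (meridian at the corner, then pulled clockwise along $\gamma$ back to $p_0$). Fixing the correct geometric conventions for the basing paths of the $\delta_i$ and for the perturbation that orders the critical values then reduces the lemma to a short finite case check.
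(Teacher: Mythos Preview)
Your approach is correct and rests on the same local Picard--Lefschetz input as the paper's proof, but the organization differs. The paper does not perturb $F$ to split the four saddle values and then compose four Dehn twists; instead it decomposes the real oval as $\gamma=\rho_0\rho_1\rho_2\rho_3$, with $\rho_i$ an arc from a point $x_i$ in a neighbourhood $U_i$ of the $i$-th saddle to $x_{i+1}\in U_{i+1}$, and applies the local Picard--Lefschetz rule $Mon_0(\rho_i)=\mathring{\delta}_i\rho_i$, $Mon_0(\mathring{\delta}_i)=\mathring{\delta}_i$ at all four saddles simultaneously. Since the based generators are expressed as $\delta_i=\rho_0\cdots\rho_{i-1}\,\mathring{\delta}_i\,\rho_{i-1}^{-1}\cdots\rho_0^{-1}$, the nested conjugations drop out immediately. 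Your ``elementary rule'' for a single $T_{\delta_j}$ (fixing earlier $\delta_i$, conjugating later ones, and sending $\gamma\mapsto\delta_j\gamma$) is exactly what this arc decomposition encodes, so the two computations are the same at the level of formulas. The paper's route has the minor advantage of avoiding the perturbation argument and the need to pin down the ordering of the twists; your route makes the mapping-class-group structure more transparent.
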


\begin{proof}
	These formulas follow from standard homotopical computations proving the 
	Picard-Lefschetz formula, see e.g. \cite{AVG2}. From the local topology in 
	a neighborhood of the center critical point, we have $Mon_1\gamma=\gamma$, 
	and $Mon_1\delta_i=\gamma\delta_i$, since the intersection number between 
	$\gamma$ and $\delta_i$ is one, and $\gamma$ is the cycle vanishing at the 
	center critical value when the regular value tends to 1. For the monodromy 
	around the critical value 0, we divide the real cycle $\gamma$ into pieces 
	$\gamma=\rho_0\rho_1\rho_2\rho_3$, where $\rho_{i}$ goes from a point $x_i$ 
	in $\gamma\cap U_i$ to a point $x_{i+1}$ in $\gamma\cap U_{i+1}$, where 
	$U_j$ is a neighborhood of the critical point at which $\delta_j$ vanish, 
	with $x_0$ and $x_4$ equal to the chosen initial point $p_0$.  Let 
	$\mathring{\delta_i}$ be the generator of $\pi_1(F^{-1}(t)\cap U_i,x_i)$. 
	From Picard-Lefschetz formula, locally at the neighborhood $U_i$ we have 
	$Mon_0(\rho_i)=\mathring{\delta_i}\rho_i$, and 
	$\Mon_0\mathring{\delta_i}=\mathring{\delta_i}$. Notice that 
	$\delta_i=\rho_0\cdots\rho_i\mathring{\delta_i}\rho_i^{-1}\cdots\rho_0^{-1}$.
	 Then, applying the local monodromy at each saddle critical point we get 
	the result. 
\end{proof}


\subsection{Chipping out Homology  and $Mon_1$}
Note that $\delta=Mon_0(\gamma)\gamma^{-1}$ is in $\OO$, and that 
$\gamma,\delta$ span the orbit of $\gamma$ in 
$H_1(\Gamma_t)$. From the exact sequence 
$$
0\to (\OO\cap L_2)/ K\to \OO/K\to \OO_1\to 0,$$
where $\OO_1=\langle \gamma, \delta\rangle\subset H_1(\Gamma_t)$, it is clear that the next step is to consider the 
action of the monodromy on $L_2=[\pi_1,\pi_1]$. It turns out that, up to subgroup generated by $\gamma$ and $\delta$, 
the action of $Mon_1$ on $L_2$ is trivial, thus allowing to disregard $Mon_1$.

Let $\Gamma$ be the normal subgroup of $\pi_1$ generated by $\gamma, \delta$. 
 Evidently, $\Gamma\subset\OO$ and $[\Gamma,\pi_1]\subset K$ is a normal subgroup of $\pi_1$
 generated by  commutators
$[\gamma, c]$, $[\delta,c], c\in\pi_1$.

The group 
$(\OO\cap L_2)/ K$ is  a 
subgroup of $L_2/K$, which is a factor of 
$L_2/[\Gamma,\pi_1]$. The latter is isomorphic to the commutator 
$G_2=[G,G]$ of the free group $G$ generated by $\delta_1,\delta_2,\delta_3$.

\begin{lemma}\label{lem:induced monodromy Mon_1}
	$Mon_1$ preserves both $L_2$ and $\Gamma$. The induced action of $Mon_1$ on 
	$\pi_1/\Gamma$ is 	trivial.  
\end{lemma}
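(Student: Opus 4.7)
The plan is to exploit the characteristic nature of the commutator subgroup plus the very explicit form of $Mon_1$ given in Lemma~\ref{lem:monodromy}. The argument is short because each generator is shifted only by a power of $\gamma$, and $\gamma$ already lies in $\Gamma$.

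First, preservation of $L_2$ is automatic: every group automorphism, in particular $Mon_1$, preserves the commutator subgroup $L_2=[\pi_1,\pi_1]$.

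For the second assertion I would proceed directly on the free generators $\gamma,\delta_0,\delta_1,\delta_2,\delta_3$ of $\pi_1$. By Lemma~\ref{lem:monodromy}, $Mon_1(\gamma)=\gamma$ and $Mon_1(\delta_i)=\gamma\delta_i$. Since $\gamma\in\Gamma$ by definition, we get
\[
Mon_1(\gamma)\equiv\gamma,\qquad Mon_1(\delta_i)\equiv\delta_i\pmod{\Gamma}
\]
for every $i$. Because the images agree with the generators modulo $\Gamma$, the induced endomorphism of $\pi_1/\Gamma$ is the identity on a generating set, hence the trivial automorphism of the quotient.

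Finally, the triviality of the action on $\pi_1/\Gamma$ automatically gives $Mon_1(\Gamma)\subset\Gamma$: if $g\in\Gamma$ then $g\equiv 1\pmod\Gamma$, so $Mon_1(g)\equiv 1\pmod\Gamma$, i.e. $Mon_1(g)\in\Gamma$. Combined with the fact that $Mon_1$ is an automorphism, this yields $Mon_1(\Gamma)=\Gamma$. I do not expect any real obstacle here; the whole content of the lemma is that the non-trivial part of $Mon_1$ is a multiplication by $\gamma$, and $\gamma$ has been thrown away when passing to the quotient by $\Gamma$.
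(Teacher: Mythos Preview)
Your argument is correct and follows essentially the same route as the paper: both use that $Mon_1$ is an automorphism (so $L_2$ is preserved) and that $Mon_1(\delta_i)=\gamma\delta_i\equiv\delta_i\pmod\Gamma$. The only cosmetic difference is that the paper first verifies $Mon_1(\delta)\in\Gamma$ by an explicit computation and then checks triviality on $\pi_1/\Gamma$, whereas you reverse the order, deducing $Mon_1(\Gamma)\subset\Gamma$ from the identity $q\circ Mon_1=q$ on generators of the free group $\pi_1$; your ordering is logically fine (and arguably cleaner), though you might phrase the middle step as ``$q\circ Mon_1=q$'' rather than speaking of ``the induced endomorphism'' before its existence has been established.
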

\begin{proof}
	As $ Mon_1$ is an  automorphism of $\pi_1$, it preserves $L_2$.
	Also, $Mon_1(\gamma)=\gamma$, $Mon_1(\delta)=\gamma\delta_0\gamma\delta_1 
	\gamma\delta_2\gamma\delta_3=\delta \operatorname{mod}\Gamma\in\Gamma$, so 	
	$Mon_1$ preserves  $\Gamma$. Also 
	$Mon_1(\delta_i)=\gamma\delta_i=\delta_i \operatorname{mod}\Gamma$, which proves the last statement.
	\end{proof}

As $\Gamma\cap L_2\subset K$, this implies  that $Mon_1$ acts trivially on $ L_2/K$ and therefore can be disregarded.
\begin{corollary}
	$\OO$ is generated by $\gamma$ and $\Mon^i_0(\delta)$, $i=0,1,\dots$.
\end{corollary}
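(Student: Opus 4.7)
The plan is to prove $\OO = N$, where $N$ denotes the normal subgroup of $\pi_1$ generated by $\gamma$ together with the family $\{\Mon_0^i(\delta)\}_{i\ge 0}$. The easy inclusion $N\subseteq\OO$ is direct: since $\gamma\in\OO$ and $\OO$ is stable under the $\Mon$-action, all iterates $\Mon_0^i(\gamma)$ lie in $\OO$, and the identity $\Mon_0^i(\delta) = \Mon_0^{i+1}(\gamma)\cdot(\Mon_0^i(\gamma))^{-1}$ places every $\Mon_0^i(\delta)$ in $\OO$; normality then gives $N\subseteq\OO$.

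For the reverse inclusion I would invoke the minimality of $\OO$: as the smallest normal subgroup of $\pi_1$ containing $\gamma$ and stable under the action of $\langle\Mon_0,\Mon_1\rangle$, it suffices to show that $N$ shares these properties. Since $\Mon_0$ and $\Mon_1$ are automorphisms of $\pi_1$, sending normal subgroups to normal subgroups, it is enough to verify that each of $\Mon_0^{\pm 1}$ and $\Mon_1^{\pm 1}$ maps the listed generators of $N$ back into $N$.

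Three of the four required stability checks are routine. For $\Mon_0$: $\Mon_0(\gamma) = \delta\gamma\in N$ and $\Mon_0(\Mon_0^i(\delta)) = \Mon_0^{i+1}(\delta)\in N$. For $\Mon_1^{\pm 1}$: Lemma~\ref{lem:induced monodromy Mon_1} states that both act trivially on $\pi_1/\Gamma$, where $\Gamma$ is the normal closure of $\gamma$ and $\delta$; since $\gamma,\delta\in N$ forces $\Gamma\subseteq N$, we conclude $\Mon_1^{\pm 1}(\sigma)\sigma^{-1}\in\Gamma\subseteq N$ for every $\sigma\in\pi_1$, hence $\Mon_1^{\pm 1}(N)\subseteq N$.

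The main obstacle is $\Mon_0^{-1}$-invariance. On generators $\Mon_0^i(\delta)$ with $i\ge 1$ this is immediate, since $\Mon_0^{-1}(\Mon_0^i(\delta)) = \Mon_0^{i-1}(\delta)\in N$; the delicate cases are $\Mon_0^{-1}(\gamma)$ and $\Mon_0^{-1}(\delta)$. I would handle them by an explicit computation using the Picard--Lefschetz formulas of Lemma~\ref{lem:monodromy}: inverting in the free group gives $\Mon_0^{-1}(\delta_i) = (\delta_0\cdots\delta_{i-1})^{-1}\delta_i(\delta_0\cdots\delta_{i-1})$, whence $\Mon_0^{-1}(\delta) = \delta_3\delta_2\delta_1\delta_0$ and $\Mon_0^{-1}(\gamma) = (\delta_3\delta_2\delta_1\delta_0)^{-1}\gamma$. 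Writing $\delta_3\delta_2\delta_1\delta_0$ as $\delta$ times an iterated product of commutators of the $\delta_j$'s and leveraging normality of $N$ together with $\delta\in N$ should exhibit these as elements of $N$. Once $\Mon_0^{-1}(N)\subseteq N$ is secured, the four stability checks combine with the minimality of $\OO$ to yield $\OO\subseteq N$, completing the proof.
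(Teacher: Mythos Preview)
Your overall strategy---show that the normal subgroup $N$ generated by $\gamma$ and the $\Mon_0^i(\delta)$ contains $\gamma$ and is $\Mon$-stable, then invoke minimality of $\OO$---is exactly the reasoning the paper has in mind. The paper states the Corollary immediately after observing (via Lemma~\ref{lem:induced monodromy Mon_1}) that $\Mon_1$ acts trivially on $\pi_1/\Gamma$, so ``can be disregarded''; your treatment of $\Mon_1^{\pm 1}$ and of $\Mon_0$ on the generators is correct and matches this.

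You are also right that $\Mon_0^{-1}$-stability is the non-obvious point, and your computation $\Mon_0^{-1}(\delta)=\delta_3\delta_2\delta_1\delta_0$ is correct (though your intermediate formula for $\Mon_0^{-1}(\delta_i)$ has the conjugating word reversed: it should be $(\delta_{i-1}\cdots\delta_0)^{-1}\delta_i(\delta_{i-1}\cdots\delta_0)$). The paper does not spell out this step. However, your proposed resolution has a genuine gap: writing $\delta_3\delta_2\delta_1\delta_0=\delta\cdot c$ with $c$ a product of commutators of the individual $\delta_j$ only shows $c\in L_2=[\pi_1,\pi_1]$, not $c\in N$. Normality of $N$ together with $\delta\in N$ gives you conjugates of $\delta$ and commutators $[\delta,\sigma]$, but a commutator like $[\delta_1,\delta_2]$---with neither factor in $N$---is not produced this way, and indeed $\delta_3\delta_2\delta_1\delta_0$ is not even conjugate to $\delta$ in the free group (the two words are not cyclic rotations of one another). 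So the sentence ``leveraging normality of $N$ together with $\delta\in N$ should exhibit these as elements of $N$'' does not go through as written.

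To close the gap you must either (i) allow $i\in\Z$ in the generating set (which is all the downstream arguments actually need, since Lemma~\ref{lem:representation} works modulo $K$ anyway), or (ii) give a genuine argument that the specific $L_2$-element $\delta_3\delta_2\delta_1\delta_0\cdot\delta^{-1}$ lies in the normal closure of $\{\gamma,\Mon_0^i(\delta):i\ge 0\}$, which would require using the higher $\Mon_0^i(\delta)$ and not merely $\delta$.
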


\begin{lemma}\label{lem:associated monodromy is trivial}
	$Mon_0$ preserves  $L_i\cap\langle 
	\delta_0, \delta_1,\delta_2, 
	\delta_3\rangle$ and 
	the induced action of $Mon_0$ on $L_i\cap\langle 
	\delta_0, \delta_1,\delta_2, 
	\delta_3\rangle /\left(L_{i+1}\cap\langle 
	\delta_0, \delta_1,\delta_2, 
	\delta_3\rangle\right)$ is trivial.
\end{lemma}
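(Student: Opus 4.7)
My plan is to reduce both assertions to a general fact about automorphisms of free groups that are trivial on the abelianization. Set $H := \langle \delta_0,\delta_1,\delta_2,\delta_3\rangle\subset\pi_1$. First I would read off from Lemma~\ref{lem:monodromy} that each generator of $H$ is mapped by $Mon_0$ to a conjugate of itself by an element of $H$:
$$ Mon_0(\delta_i) \;=\; c_i\,\delta_i\,c_i^{-1}, \qquad c_i \;:=\; \delta_0\delta_1\cdots\delta_{i-1}\in H. $$
In particular $Mon_0(H)\subseteq H$, and since the same formula with $c_i^{-1}$ in place of $c_i$ describes $Mon_0^{-1}$ on $H$, the restriction $Mon_0|_H$ is an automorphism of $H$. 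Because $Mon_0$ is an automorphism of $\pi_1$ it also preserves each $L_i$, and hence preserves $L_i\cap H$.

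For the second claim I would observe that the displayed formula gives $Mon_0(\delta_i) \equiv \delta_i \pmod{[H,H]}$, so $Mon_0|_H$ is an IA-automorphism of $H$. The claim then reduces to the following well-known general fact, which I would prove by induction on $i$: if $\phi$ is an endomorphism of a group $G$ with $\phi(g)\equiv g \pmod{L_2(G)}$ for every generator $g$, then $\phi(v)\equiv v \pmod{L_{i+1}(G)}$ for every $v\in L_i(G)$. The inductive step uses the standard commutator identities $[ab,c]=a[b,c]a^{-1}[a,c]$ and $[a,bc]=[a,b]\cdot b[a,c]b^{-1}$ together with the filtration property $[L_j,L_k]\subseteq L_{j+k}$: writing $\phi(u)=u\cdot u'$ with $u'\in L_{i+1}$ and $\phi(v)=v\cdot v'$ with $v'\in L_2$, every ``cross'' term in $[\phi(u),\phi(v)]$ lies in $L_{i+2}$, so $\phi([u,v])\equiv[u,v]\pmod{L_{i+2}}$.

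Finally I would apply this general fact to the automorphism $Mon_0|_H$ of the free group $H$. To translate the conclusion back into the statement of the lemma, I would use the identification $L_i\cap H=L_i(H)$, which follows from the retraction $r:\pi_1\to H$ sending $\gamma\mapsto 1$ and $\delta_i\mapsto\delta_i$ (this is a well-defined homomorphism since $\pi_1$ is free on $\gamma,\delta_0,\dots,\delta_3$): an element $x\in H\cap L_i(\pi_1)$ satisfies $x=r(x)\in L_i(H)$, while the reverse inclusion is tautological.

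\textbf{Main obstacle.} The only real difficulty is the inductive step of the general fact: one must verify that the ``error'' terms $u'\in L_{i+1}$ and $v'\in L_2$ interact with the commutator bracket so that all correction terms land in $L_{i+2}$. This is exactly where the sharp filtration identity $[L_i,L_2]\subseteq L_{i+2}$ is used, and the bookkeeping must be performed with care for the argument to close cleanly.
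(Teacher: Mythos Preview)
Your argument is correct and is a careful unpacking of what the paper records as a one-line ``follows immediately from Lemma~\ref{lem:monodromy}''. The key ingredients you isolate---that $Mon_0(\delta_i)=c_i\delta_i c_i^{-1}$ with $c_i\in H$, the identification $L_i(\pi_1)\cap H=L_i(H)$ via the retraction $\gamma\mapsto 1$, and the standard fact that an endomorphism acting trivially on the abelianization acts trivially on every lower central quotient---are precisely what the authors have in mind; you have simply made the reasoning explicit.

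One small slip: your description of $Mon_0^{-1}|_H$ is not right. Replacing $c_i$ by $c_i^{-1}$ does not give the inverse; for instance with $c_2=\delta_0\delta_1$ one checks directly that $Mon_0(c_2^{-1}\delta_2 c_2)\neq\delta_2$. The correct formula is $Mon_0^{-1}(\delta_i)=\tilde c_i^{\,-1}\delta_i\,\tilde c_i$ with $\tilde c_i=\delta_{i-1}\cdots\delta_1\delta_0$ the reversed product, which follows from the telescoping identity $Mon_0(\delta_{i-1}\cdots\delta_0)=\delta_0\cdots\delta_{i-1}=c_i$. This still lands in $H$, so $Mon_0|_H$ is indeed an automorphism. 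That said, you do not actually need bijectivity for the lemma: it suffices that $Mon_0$ sends $L_i\cap H$ into itself (so the induced map on the quotient is defined), and this already follows from $Mon_0(H)\subseteq H$ together with $Mon_0(L_j)=L_j$. So the slip is harmless and the rest of your plan goes through unchanged.
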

\begin{proof} Follows immediately from Lemma~\ref{lem:monodromy}.\end{proof}

So we have to investigate the orbit of $\delta$ in the free group generated by $\langle\delta_i,\,i=0,1,2,3\rangle$, 
under the 
action of $\Mon_0$ given by Lemma~\ref{lem:monodromy}.

Define $M(\sigma)=\delta_1^{-1}\delta_0^{-1}Mon_0(\sigma)\delta_0\delta_1$ on 
$\langle 
\delta, \delta_1,\delta_2, 
\delta_3\rangle$, 
\begin{equation}\label{eq:def of M}
M=\{\delta_0\mapsto\delta_1^{-1}\delta_0\delta_1, 
\delta_1\mapsto\delta_1,\delta_2\mapsto\delta_2,\delta_3\mapsto
[\delta_2,\delta_3]\delta_3\},  
\end{equation} 
and define $Var(\sigma)=M(\sigma)\sigma^{-1}$. Note that for $\sigma\in \OO$
\begin{equation}\label{eq:var and var_0}
Var(\sigma)=[\delta_1^{-1}\delta_0^{-1},Mon_0(\sigma)]Mon_0(\sigma)\sigma^{-1}
=Mon_0(\sigma)\sigma^{-1}\operatorname{mod}K\in\OO.
\end{equation}
This means that both $Mon_0$ and $M$ generate the same orbit, and one can use either of them. However, $M$ 
is computationally more convenient.
We formally define $Var(\gamma)=\delta$ and 
$Var^{i+1}(\gamma)=Var^i(\delta)$.

\begin{corollary}
	For any $i\ge 1$, $Var^i(\gamma)\in L_{i}\cap\langle 
	\delta, \delta_1,\delta_2, 
	\delta_3\rangle$.
\end{corollary}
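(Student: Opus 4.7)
The plan is a straightforward induction on $i$, using the recursion $\sigma_{i+1}:=Var^{i+1}(\gamma)=Var^i(\delta)=Var(\sigma_i)=M(\sigma_i)\sigma_i^{-1}$, where $\sigma_i=Var^i(\gamma)$. The base case $i=1$ is immediate: $\sigma_1=\delta=\delta_0\delta_1\delta_2\delta_3$ lies in $L_1=\pi_1$ and in $\langle\delta,\delta_1,\delta_2,\delta_3\rangle=\langle\delta_0,\delta_1,\delta_2,\delta_3\rangle$ (the two subgroups coincide, since $\delta_0=\delta\,\delta_3^{-1}\delta_2^{-1}\delta_1^{-1}$).

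For the inductive step I would track the two containments separately. For the subgroup containment, I would read off from \eqref{eq:def of M} that $M$ sends each generator $\delta_j$ to a word in $\delta_0,\delta_1,\delta_2,\delta_3$; hence $M$ restricts to an endomorphism of the free subgroup $\langle\delta_0,\delta_1,\delta_2,\delta_3\rangle$, and therefore $\sigma_{i+1}=M(\sigma_i)\sigma_i^{-1}$ remains in this subgroup whenever $\sigma_i$ does.

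The depth claim $\sigma_{i+1}\in L_{i+1}$ is where Lemma~\ref{lem:associated monodromy is trivial} enters. Applied to $\sigma_i\in L_i\cap\langle\delta_0,\ldots,\delta_3\rangle$, it gives $Mon_0(\sigma_i)\sigma_i^{-1}\in L_{i+1}\cap\langle\delta_0,\ldots,\delta_3\rangle$. I would then transfer this from $Mon_0$ to $M$ via identity \eqref{eq:var and var_0}, which reads
\[
Var(\sigma_i)=[\delta_1^{-1}\delta_0^{-1},Mon_0(\sigma_i)]\cdot Mon_0(\sigma_i)\sigma_i^{-1}.
\]
Since $Mon_0$ preserves $L_i$ and so $Mon_0(\sigma_i)\in L_i$, the commutator lies in $[\pi_1,L_i]\subset L_{i+1}$; the second factor is in $L_{i+1}$ by the lemma. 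Both factors are therefore in $L_{i+1}$, so $\sigma_{i+1}\in L_{i+1}$, closing the induction.

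I do not anticipate any real obstacle here: the essential content, namely that an automorphism acting trivially modulo $L_2$ shifts the lower central series down by one step at each level, is already packaged in Lemma~\ref{lem:associated monodromy is trivial}. What remains is only the bookkeeping sketched above and the one-line commutator manipulation to pass from $Mon_0$ to $M$.
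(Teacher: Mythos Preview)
Your proposal is correct and follows essentially the same approach as the paper's proof: induction on $i$, with the subgroup containment coming from the observation that $M$ preserves $\langle\delta_0,\delta_1,\delta_2,\delta_3\rangle$, and the descent into $L_{i+1}$ coming from Lemma~\ref{lem:associated monodromy is trivial} combined with the identity \eqref{eq:var and var_0}. Your write-up is in fact more detailed than the paper's, which compresses the same two ingredients into a single sentence.
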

\begin{proof} We have $Var(\gamma)=\delta\in \pi_1=L_1$. 
	Therefore $Var^i(\gamma)$ belong to the subgroup generated by $\delta_i$. 
	By 	induction, and using 
	Lemma~\ref{lem:associated monodromy is trivial} and \eqref{eq:var and var_0}, we see that  
	$Mon_0(Var^{i}(\gamma))=Var^i(\gamma)\operatorname{mod} L_{i+1}\cap\langle 
	\delta, \delta_1,\delta_2, 
	\delta_3\rangle$. Therefore $Var^{i+1}(\gamma)\in L_{i+1}\cap\langle 
	\delta, \delta_1,\delta_2, 
	\delta_3\rangle$.
\end{proof}

\begin{lemma}\label{lem:representation}
	Any element $w\in\OO $ can be represented as 
	\begin{equation}
	w=\gamma^{n_0}Var(\gamma)^{n_1}\dots Var^i(\gamma)^{n_k}\,\operatorname{mod} K, 
	\quad i=i(w).
	\end{equation}
\end{lemma}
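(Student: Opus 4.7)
The plan is to combine three observations: that $\OO/K$ is abelian; that $\pi_1$ acts trivially on $\OO/K$ by conjugation, so normal generators of $\OO$ descend to ordinary abelian generators of $\OO/K$; and the preceding Corollary, which already gives us an explicit normal generating set $\{\gamma, Mon_0^i(\delta)\}_{i\ge 0}$. Once these are in place, the statement reduces to translating the $Mon_0^i(\delta)$ into words in the $Var^j(\gamma)$, which is a direct induction from \eqref{eq:var and var_0}.

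First I would check that $\OO/K$ is an abelian group on which conjugation by $\pi_1$ acts trivially. Since $K=[\OO,\pi_1]$ and $\OO\subset\pi_1$, we have $[\OO,\OO]\subset K$, so $\OO/K$ is abelian. Moreover, for $c\in\pi_1$ and $w\in\OO$, the identity $cwc^{-1}w^{-1}=[c,w]\in K$ shows $cwc^{-1}\equiv w \mod K$. Consequently, a set that normally generates $\OO$ maps to a set that generates $\OO/K$ as an abelian group. By the Corollary, such a normal generating set is $\{\gamma\}\cup\{Mon_0^i(\delta):i\ge 0\}$.

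Next I would express each $Mon_0^i(\delta)$ as a product of powers of $Var^j(\gamma)$ modulo $K$. The key identity is \eqref{eq:var and var_0}, which gives $Mon_0(\sigma)\equiv \sigma\cdot Var(\sigma) \mod K$ for every $\sigma\in\OO$. Applied to $\sigma=\delta=Var(\gamma)$, this gives
\begin{equation*}
Mon_0(\delta)\equiv Var(\gamma)\cdot Var^2(\gamma)\mod K,
\end{equation*}
and more generally, arguing by induction on $i$ and using the abelianness of $\OO/K$, one obtains that $Mon_0^i(\delta)$ is a product of powers of $Var^j(\gamma)$ for $1\le j\le i+1$. This is a routine calculation of binomial-type coefficients; the only point to keep track of is that each time $Mon_0$ is applied it introduces one additional $Var$ applied to every factor.

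Combining these facts, any $w\in\OO$ is, modulo $K$, a finite product of powers of $\gamma$ and the $Mon_0^i(\delta)$; rewriting each $Mon_0^i(\delta)$ in terms of the $Var^j(\gamma)$ and collecting powers (using abelianness) gives the claimed representation $w\equiv \gamma^{n_0}Var(\gamma)^{n_1}\cdots Var^i(\gamma)^{n_k}\mod K$, with $i=i(w)$ finite since only finitely many generators appear in the word for $w$. The entire argument is bookkeeping; the only step that requires care is verifying that the generators provided by the Corollary indeed suffice after passing to $\OO/K$, i.e.\ that normal closure collapses to ordinary closure, which is exactly the content of the first paragraph.
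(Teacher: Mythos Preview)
Your proof is correct and follows essentially the same route as the paper's. The paper's argument also reduces to showing that each $M^i(\gamma)$ (equivalently, each $Mon_0^i(\delta)$, since $M\equiv Mon_0$ on $\OO$ modulo $K$) is a product of powers of the $Var^j(\gamma)$ modulo $K$, via the same induction step $M(\sigma)\equiv Var(\sigma)\cdot\sigma$; your version simply makes explicit the observation (which the paper uses tacitly in the line ``any element in $\OO/K$ is a product of $M^i(\gamma)$'') that normal generators of $\OO$ descend to ordinary abelian generators of $\OO/K$.
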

\begin{proof}Indeed, any element in  $\OO/K$ is a product of $M^i(\gamma)$, and 
these 
elements can be represented in this form: if 
\begin{equation}
M^i(\gamma)=\gamma^{n_0}\delta^{n_1}\dots 
Var^i(\gamma)^{n_k}\,\operatorname{mod} K,
\end{equation}
then
\begin{align*}
M^{i+1}(\gamma)&=M(\gamma^{n_0})M(\delta^{n_1})\dots 
M(Var^i(\gamma)^{n_k})\,\operatorname{mod} K\\
&=(Var(\gamma)\gamma)^{n_0}(Var(\delta)\delta)^{n_1}\dots
(Var^{i+1}(\gamma)Var^i(\gamma))^{n_k}\,\operatorname{mod} K\\
&=\gamma^{n_0}\delta^{n_0+n_1}\dots Var^{i}(\gamma)^{n_{k-1}+n_k}
Var^{i+1}(\gamma)^{n_k}\,\operatorname{mod} K,
\end{align*}
as  $Var^i(\gamma)$
commute modulo $K$.
\end{proof}
\bigskip

Define by induction the maps  $d_k:\pi_1\to\pi_1$   as  
$d_1=Id$, and
$d_{k+1}(\sigma)=[\delta_2,d_k(\sigma)]$. Note that
$d_k(\sigma)=[\delta_2,[\delta_2,[\dots[\delta_2,\sigma]\dots]\in
L_k$ for all $k\ge 1$, $\sigma\in\pi_1$. 

\begin{proposition}\label{prop:variations}
Denote  $x=\delta_1\delta_2$, $z=\delta_2\delta_3$ and define 
\begin{equation}
v_1=\delta,\quad	
v_k=[x,d_{k-1}(z)] \text{ for } k\ge2.
	\end{equation}
Then $v_i\in\OO$ and 
	$Var^i(\gamma)=v_i\operatorname{mod}K$.
\end{proposition}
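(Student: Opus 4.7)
The plan is to argue by induction on $i$, with the main work happening in the inductive step. I would begin by observing that $M$, being the composition of the automorphism $Mon_0$ with a conjugation, is an automorphism of $\pi_1$ preserving both $\OO$ (by $Mon_0$-invariance and normality) and $K=[\OO,\pi_1]$. A short check then shows that $\sigma\mapsto Var(\sigma) = M(\sigma)\sigma^{-1}$ descends to a well-defined self-map of $\OO/K$, so that whenever $Var^i(\gamma)\equiv v_i\operatorname{mod} K$ one has $Var^{i+1}(\gamma)\equiv Var(v_i)\operatorname{mod} K$. The auxiliary claim $v_i\in\OO$ will then follow automatically from $v_i = Var^i(\gamma)\cdot k^{-1}$ for some $k\in K\subset\OO$, so the proposition reduces to establishing $Var(v_i)\equiv v_{i+1}\operatorname{mod} K$ for every $i\ge 1$.

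For the inductive step with $i\ge 2$, I would exploit the action of $M$ on the auxiliary elements $x$ and $z$. From \eqref{eq:def of M} one reads off $M(x) = x$ and $M(z) = \delta_2 z\delta_2^{-1}$. A short subsidiary induction on $k$, using that $M$ is a homomorphism and $M(\delta_2) = \delta_2$, then gives $M(d_k(z)) = \delta_2\, d_k(z)\,\delta_2^{-1}$ for all $k\ge 1$. Writing $\delta_2\, d_{i-1}(z)\,\delta_2^{-1} = [\delta_2, d_{i-1}(z)]\cdot d_{i-1}(z) = d_i(z)\cdot d_{i-1}(z)$ and applying the identity $[a, bc] = [a, b]\cdot b[a, c] b^{-1}$ yields
\[
M(v_i) = [x,\, d_i(z)\cdot d_{i-1}(z)] = v_{i+1}\cdot d_i(z)\, v_i\, d_i(z)^{-1},
\]
so that $Var(v_i) = v_{i+1}\cdot [d_i(z), v_i]$. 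Since $v_i\in\OO$ by the inductive hypothesis, $[d_i(z), v_i]\in [\pi_1, \OO] = K$, which finishes this step.

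The delicate piece is the remaining base case $i=1$: showing $Var(\delta)\equiv [x, z]\operatorname{mod} K$. This does not follow from the same mechanism because $\delta$ is not of the form $[x, d_{k-1}(z)]$, so I would expand $M(\delta)$ explicitly via \eqref{eq:def of M} to obtain the word
\[
Var(\delta) \;=\; [\delta_1^{-1},\delta_0]\cdot (\delta_0\delta_1\delta_2)[\delta_2,\delta_3](\delta_0\delta_1\delta_2)^{-1},
\]
and then compare term by term with $[x, z] = [\delta_1\delta_2, \delta_2\delta_3]$ expanded by the same commutator identity. As a sanity check, in $L_2/L_3\cong \wedge^2 H_1(\Gamma_t)$ the two sides differ by $[\delta,\delta_1]$, which lies in $K$ because $\delta\in\OO$; the full identity modulo $K$ requires carefully absorbing the higher-order discrepancies (conjugates of $[\delta,g]$ type) into $[\OO,\pi_1]$, and this bookkeeping is the main obstacle of the proof.
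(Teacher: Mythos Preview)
Your overall strategy and your handling of the inductive step $i\ge 2$ are correct and essentially match the paper; your observation $M(d_k(z))=\delta_2\,d_k(z)\,\delta_2^{-1}$ is just a repackaging of the paper's auxiliary lemma $d_{k-1}([\delta_2,z]z)=d_k(z)d_{k-1}(z)$, and arguably a cleaner one.

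The genuine gap is exactly where you flag it: the base case $Var(\delta)\equiv[x,z]\bmod K$. You write down $Var(\delta)=[\delta_1^{-1},\delta_0]\cdot(\delta_0\delta_1\delta_2)[\delta_2,\delta_3](\delta_0\delta_1\delta_2)^{-1}$ and then stop, conceding that matching this with $[\delta_1\delta_2,\delta_2\delta_3]$ modulo $K$ is ``the main obstacle.'' The difficulty is real: neither factor in your decomposition lies in $\OO$ individually, so you cannot work in $\OO/K$ term by term, and pushing the comparison through $L_2/L_3$, $L_3/L_4,\dots$ by hand is unpleasant.

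The paper dissolves this obstacle with one preliminary move you are missing: since $\delta=\delta_0\delta_1\delta_2\delta_3\in\OO$, the commutator $[\delta_0^{-1},\delta^{-1}]$ lies in $K$, and therefore
\[
\delta \;=\; \delta_1\delta_2\delta_3\delta_0\cdot[\delta_0^{-1},\delta^{-1}] \;\equiv\; \delta_1\delta_2\delta_3\delta_0 \pmod{K}.
\]
Applying $M$ to the cyclically shifted word $\delta_1\delta_2\delta_3\delta_0$ (instead of $\delta$ itself) and using $M(\delta_0)=\delta_1^{-1}\delta_0\delta_1$, $M(\delta_3)=\delta_2\delta_3\delta_2^{-1}$ gives, after a one-line regrouping,
\[
Var(\delta)\;\equiv\; M(\delta_1\delta_2\delta_3\delta_0)\,\delta^{-1}
\;=\;[\delta_1\delta_2,\delta_2\delta_3]\,[\delta_2\delta_3,\delta]
\;\equiv\;[x,z]\pmod{K},
\]
the last step because $[\delta_2\delta_3,\delta]\in[\pi_1,\OO]=K$. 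So the ``bookkeeping'' collapses to a single commutator in $K$ once you move $\delta_0$ to the end before applying $M$; this is the missing idea.
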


Before proving Proposition~\ref{prop:variations}, we prove
\begin{lemma} 
	$d_{k-1}([\delta_2,z]z)=d_k(z)d_{k-1}(z)$.
\end{lemma}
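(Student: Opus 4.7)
The plan is to recognize that both sides of the identity collapse to the same conjugate $\delta_2 d_{k-1}(z) \delta_2^{-1}$. First I would simplify $[\delta_2,z]z = \delta_2 z \delta_2^{-1}$ (the trailing $z^{-1}z$ cancels), so the left-hand side becomes $d_{k-1}(\delta_2 z \delta_2^{-1})$. On the right, by the definition $d_k(z) = [\delta_2, d_{k-1}(z)]$ and the same cancellation applied with $d_{k-1}(z)$ in place of $z$, we get $d_k(z) d_{k-1}(z) = [\delta_2, d_{k-1}(z)] d_{k-1}(z) = \delta_2 d_{k-1}(z) \delta_2^{-1}$. Thus the lemma reduces to the intertwining identity
\[
d_{k-1}(\delta_2 z \delta_2^{-1}) = \delta_2 d_{k-1}(z) \delta_2^{-1}.
\]

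The cleanest way to establish this is a slightly more general statement: if $\phi \colon \pi_1 \to \pi_1$ is any group endomorphism with $\phi(\delta_2) = \delta_2$, then $\phi \circ d_j = d_j \circ \phi$ for every $j \ge 1$. The base case $d_1 = Id$ is trivial, and the inductive step is the one-line computation
\[
\phi(d_{j+1}(\sigma)) = \phi([\delta_2, d_j(\sigma)]) = [\phi(\delta_2), \phi(d_j(\sigma))] = [\delta_2, d_j(\phi(\sigma))] = d_{j+1}(\phi(\sigma)),
\]
which uses only that $\phi$ is a homomorphism, that $\phi(\delta_2) = \delta_2$, and the inductive hypothesis. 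Specializing $\phi$ to the inner automorphism $\sigma \mapsto \delta_2 \sigma \delta_2^{-1}$ (which fixes $\delta_2$) and evaluating at $\sigma = z$ yields the required identity.

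There is no serious obstacle here: the content of the lemma is the small cancellation $[\delta_2,z]z = \delta_2 z \delta_2^{-1}$ combined with the observation that $d_k$, being built iteratively from commutators with $\delta_2$, is automatically equivariant under conjugation by $\delta_2$. Everything else is bookkeeping.
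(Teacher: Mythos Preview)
Your argument is correct and, in my view, cleaner than the paper's. You observe that both sides equal $\delta_2\,d_{k-1}(z)\,\delta_2^{-1}$: the left because $[\delta_2,z]z=\delta_2 z\delta_2^{-1}$ and $d_{k-1}$ commutes with conjugation by $\delta_2$ (your equivariance lemma for endomorphisms fixing $\delta_2$), the right by the same cancellation $[\delta_2,d_{k-1}(z)]\,d_{k-1}(z)=\delta_2\,d_{k-1}(z)\,\delta_2^{-1}$. The paper instead argues by a direct induction on the index: it applies $d_k=[\delta_2,\,\cdot\,]$ to the inductive hypothesis $d_{k-1}([\delta_2,z]z)=d_k(z)d_{k-1}(z)$, expands $[\delta_2, d_k(z)d_{k-1}(z)]$ via a commutator product formula, and notes that the correction term $[[d_{k-1}(z),\delta_2],d_k(z)]=[d_k(z)^{-1},d_k(z)]$ is trivial. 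Your route isolates the conceptual reason the identity holds---$d_j$ is built from commutation with $\delta_2$, hence is equivariant under the inner automorphism by $\delta_2$---whereas the paper's computation verifies the same fact implicitly through the vanishing of that correction term. Both are short; yours generalizes more readily and avoids the bookkeeping of commutator identities.
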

\begin{proof} By induction,
	\begin{align}
	d_k([\delta_2,z]z)&=[\delta_2, d_{k-1}([\delta_2,z]z)]=[\delta_2, 
	d_{k}(z)d_{k-1}(z)]=\nonumber\\
	&=[\delta_2, d_k(z)][\delta_2, d_{k-1}(z)]\,\big[[ 
	d_{k-1}(z),\delta_2], d_k(z)\big]=\nonumber\\
	&=d_{k+1}(z)d_k(z)[ d_k(z)^{-1}, d_k(z)]=d_{k+1}(z)d_k(z).\qedhere
	\end{align}
\end{proof}

\begin{proof}[Proof of Proposition~\ref{prop:variations}]
From Lemma~\ref{lem:monodromy} we see that 
$v_1=Mon_0(\gamma)\gamma^{-1}=\delta$.

Note that by \eqref{eq:def of M}
\begin{equation}\label{eq:M of x,delta2 z}
M(x)=x,\quad M(\delta_2)=\delta_2,\quad 
M(z)=\delta_2z\delta_2^{-1}=[\delta_2,z]z.
\end{equation} 

We have 
\begin{equation}
\delta=\delta_1\delta_2\delta_3\delta_0[\delta_0^{-1},\delta^{-1}]=
\delta_1\delta_2\delta_3\delta_0\,\operatorname{mod}K,
\end{equation} so, modulo $K$,
\begin{align}
Var^2(\gamma)&=M(\delta_1\delta_2\delta_3\delta_0)\delta^{-1}=
\delta_1\cdot\delta_2\cdot\delta_2\delta_3\delta_2^{-1}
\cdot\delta_1^{-1}\delta_0\delta_1\cdot\delta^{-1}=\nonumber\\
&=[\delta_1\delta_2,\delta_2\delta_3][\delta_2\delta_3,\delta]=[x,z]=v_2.
\end{align}
In particular, $v_2\in\OO$.

For the third variation,  again modulo $K$,
\begin{align}
Var^3(\gamma)&=M([x,z])v_2^{-1}=[x,[\delta_2,z]z]v_2^{-1}=\nonumber\\
&=[x,[\delta_2,z]][x,z][[z,x],[\delta_2,z]]v_2^{-1}=[x,[\delta_2,z]].
\end{align}
as $[[z,x],[\delta_2,z]]=[v_2^{-1},[\delta_2,z]]\in K$.

Now, from \eqref{eq:M of x,delta2 z} follows  
$
M(v_{k})=[x,d_{k-1}([\delta_2,z]z)],
$ so 
modulo $K$,
\begin{align}
Var^{k+1}(\gamma)&=M(v_{k})v_{k}^{-1}=[x,d_{k}(z)d_{k-1}(z)]v_k^{-1}=\nonumber\\
&=[x,d_{k}(z)][x,d_{k-1}(z)]\,\big[[d_{k-1}(z),x],d_{k}(z)\big]v_k^{-1}=\nonumber\\
&=[x,d_{k}(z)]\,v_k\,[v_k^{-1},d_{k}(z)]\,v_k^{-1}=[x,d_{k}(z)]=v_{k+1}.\qedhere
\end{align}
\end{proof}

\begin{proposition}\label{prop:generators of OO}
$\OO=\langle\gamma, v_i, 
	i=1,...\rangle$.
\end{proposition}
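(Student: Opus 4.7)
The plan is to deduce Proposition~\ref{prop:generators of OO} directly from Lemma~\ref{lem:representation} and Proposition~\ref{prop:variations}.

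The containment $\langle\gamma, v_i\rangle\subseteq\OO$ is immediate: Proposition~\ref{prop:variations} already asserts $v_i\in\OO$ for each $i\geq 1$, $\gamma\in\OO$ by definition, and $\OO$ is normal in $\pi_1$, so the normal subgroup generated by $\gamma$ and the $v_i$ is contained in $\OO$.

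For the reverse inclusion, I would apply Lemma~\ref{lem:representation} to any $w\in\OO$ to write $w\equiv\gamma^{n_0}\,Var(\gamma)^{n_1}\cdots Var^i(\gamma)^{n_k}\operatorname{mod} K$, and then substitute $Var^j(\gamma)\equiv v_j\operatorname{mod} K$ from Proposition~\ref{prop:variations} to conclude $w\equiv\gamma^{n_0}v_1^{n_1}\cdots v_i^{n_k}\operatorname{mod} K$. This already yields $\OO=\langle\gamma, v_i\rangle\cdot K$, i.e.\ the quotient $\OO/K$ is generated by the images of $\gamma$ and the $v_i$.

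The step I expect to be delicate is upgrading this quotient identity to the equality of normal subgroups of $\pi_1$, which requires absorbing $K=[\OO,\pi_1]$ into $N:=\langle\gamma, v_i\rangle$. Writing $o=nk$ with $n\in N$, $k\in K$, the identity $[o,p]=n[k,p]n^{-1}[n,p]$ together with normality of $N$ reduces the problem to showing $[K,\pi_1]\subseteq N$; iterating pushes the defect into successively deeper iterated commutators of $K$ with $\pi_1$, which lie in $L_{j+2}$ after $j$ iterations. The residual nilpotence of the free group $\pi_1$, i.e.\ $\bigcap_j L_j=\{1\}$, would then complete the argument. This final bookkeeping is the principal obstacle; however, for the downstream application to infinite orbit depth, only the weaker statement that $\OO/K$ is generated by the images of $\gamma$ and the $v_i$ is needed, and that follows immediately from the first two paragraphs.
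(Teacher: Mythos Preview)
Your approach matches the paper's: both inclusions are obtained from exactly Lemma~\ref{lem:representation} and Proposition~\ref{prop:variations}, and the hard direction proceeds by an iteration that pushes the defect into arbitrarily deep terms $L_\ell$ of the lower central series, followed by an appeal to residual nilpotence of the free group $\pi_1$. The only difference is presentational---the paper performs the iteration concretely by repeatedly substituting the identity $Var^i(\gamma)=v_i\cdot(\text{word in }[a_j,Var^j(\gamma)])$ into itself to reach $Var^i(\gamma)\in\langle\gamma,v_j\rangle\cdot L_{\ell+1}$ for every $\ell$, and then concludes $\OO\subset\langle\gamma,v_j\rangle$ in a single line---so the step you single out as ``the principal obstacle'' (passing from $\OO L_{\ell}=\langle\gamma,v_j\rangle L_{\ell}$ for all $\ell$ to $\OO=\langle\gamma,v_j\rangle$) is treated just as tersely in the paper as in your sketch.
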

\begin{proof} Denote $\OO^v=\langle\gamma, v_i, 
	i=1,...\rangle$ the normal subgroup of $\pi_1$ generated by $\gamma, v_i$. 
	It follows from Proposition~\ref{prop:variations} that 
	$v_i\in\OO$, so $\OO^v\subset\OO$. 
	
	Let us prove the opposite inclusion. By Lemma~\ref{lem:representation}, $\OO=\langle \gamma, Var^i(\gamma), 
	i=1,\dots\rangle$. 
	By Lemma~\ref{prop:variations},
	\begin{equation}
	Var^i(\gamma)=v_i W_{i,1}(\{[a^1_j,Var^j(\gamma)]\}_{j\ge 0}), \quad i\ge 0,
	\end{equation} where $W_{i,1}$ are some words. Substituting  these 
	equalities into their right hand sides, we get 
\begin{equation}
Var^i(\gamma)=v_i W_{i,2}(\{[b_j,v_j],[a^2_j,[a^1_j,Var^j(\gamma)]]\}_{j\ge 
0}), \quad i\ge 0.
\end{equation}
Repeating substitution $\ell-1$ times, we get for any $\ell\ge 1$
\begin{equation}
Var^i(\gamma)=v_i\epsilon_{i,\ell} w_{i,\ell}, \quad 
\epsilon_{i,\ell}\in\OO^v,\,\,
w_{i,\ell}\in [\pi_1,[\pi_1,[\dots[\pi_1,\OO]\dots]\subset L_{\ell+1}.
\end{equation}
This implies that $\OO L_{\ell+1}\subset\OO^v L_{\ell+1}$ for any $\ell\ge 1$.
 This 
implies that $\OO\subset\OO^v$.
\end{proof}

\begin{corollary}\label{cor:generators of K}
	$K=\langle [\pi_1,\gamma],[\pi_1, v_i], 
	i=1,...\rangle$.
\end{corollary}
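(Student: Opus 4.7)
The plan is to deduce Corollary~\ref{cor:generators of K} from Proposition~\ref{prop:generators of OO} by propagating the normal generation of $\OO$ to $K=[\OO,\pi_1]$ using elementary commutator identities. No new idea beyond Proposition~\ref{prop:generators of OO} should be needed.

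First, the inclusion $\supseteq$ is immediate: since $\gamma,v_i\in\OO$, both $[\pi_1,\gamma]$ and $[\pi_1,v_i]$ are contained in $K=[\OO,\pi_1]$; and $K$ is normal in $\pi_1$ (because $\OO$ is), so the normal closure on the right-hand side lies inside $K$.

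For the reverse inclusion, let $K^v$ denote the normal subgroup of $\pi_1$ generated by the commutators $[\gamma,p]$ and $[v_i,p]$, $p\in\pi_1$, $i\geq 1$. It suffices to show $[o,p]\in K^v$ for every $o\in\OO$ and $p\in\pi_1$. By Proposition~\ref{prop:generators of OO}, any such $o$ can be written as a product of $\pi_1$-conjugates of $\gamma^{\pm 1}$ and $v_i^{\pm 1}$. I would then apply, by induction on the number of factors in this product, the standard identities
\[
[ab,p]=b^{-1}[a,p]b\cdot[b,p],\qquad [c^{-1}gc,p]=c^{-1}[g,\,cpc^{-1}]\,c,\qquad [g^{-1},p]=g\,[p,g]\,g^{-1}.
\]
Each application rewrites $[o,p]$ as a product of $\pi_1$-conjugates of commutators of the form $[\gamma,p']$ or $[v_i,p']$ for various $p'\in\pi_1$, each of which lies in $K^v$ by the definition of normal closure.

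The main, indeed only, obstacle is purely bookkeeping: one has to check that the inductive rewriting terminates with generators of $K^v$ and does not accumulate extraneous factors. This is automatic from the three identities above, since the first reduces a product of length $n$ to products of length $<n$ modulo conjugation, the second moves conjugations by arbitrary elements of $\pi_1$ into the second slot of the commutator, and the third handles inverses. Thus the corollary follows as a routine consequence of Proposition~\ref{prop:generators of OO}.
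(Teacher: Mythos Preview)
Your proposal is correct and matches the paper's approach: the paper states the corollary without proof, treating it as an immediate consequence of Proposition~\ref{prop:generators of OO}, and your argument is precisely the routine commutator calculus that justifies this. The identities you use are the standard ones, and the inductive reduction is exactly what is needed; there is nothing to add.
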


\subsection{Depth is infinite}

Here we prove Theorem~\ref{thm:depth}. The main idea is to construct for any $k$ a matrix representation
$\rho_k$ of $\pi_1$ sending all generators of $\OO$ except $v_{k+2}$ to 
identity. We prove that 
$\rho_k(v_{k+2})\not\in\rho_k(K)$ 
for a generic choice of parameters $a,c$ of the representation $\rho_k$, which implies Theorem~\ref{thm:depth}.

Let $A_0=a$, $B_0=1$ and $C_0=c$, where $a,c\not=
 0,1$.
Define inductively the $2^k\times 2^k$-matrices $A_k,B_k,C_k$ as follows:
\begin{equation}
A_{k+1}=\begin{bmatrix}
A_k& 0\\
0& \mathbb{I}
\end{bmatrix},
B_{k+1}=\begin{bmatrix}
B_k& \mathbb{I}\\
0& B_k
\end{bmatrix},
C_{k+1}=\begin{bmatrix}
\mathbb{I}& 0\\
0& C_k
\end{bmatrix},
\end{equation}
where $\mathbb{I}$ is the corresponding identity matrix.

\begin{proposition}\label{prop:representationABC}
	Let $x=\delta_1\delta_2$ and $z=\delta_2\delta_3$ be as in 
	Porposition~\ref{prop:variations}.
	Consider the representation $\rho_k:\pi_1\to GL(2^k)$ defined by 
	$\rho_k(\gamma)=\rho_k(\delta)=\mathbb{I}, \rho_k(x)=A_k, 
	\rho_k(\delta_2)=B_k$ 
	and $\rho_k(z)=C_k$. Then 
	\begin{enumerate}
		\item $\rho_k(v_i)=\mathbb{I}$ for $i\not= k+2$ and 
		$\rho_k(v_{k+2})\not=\mathbb{I}$, and
		\item $\rho_k(v_{k+2})\notin 
		[\rho_k(v_{k+2}),\rho_k(\pi_1)]=\rho_k(K)$ for generic $a,c$.
	\end{enumerate}
\end{proposition}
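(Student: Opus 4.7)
The plan is to make the commutator algebra fully explicit, reducing both claims to elementary matrix manipulations. First I would establish by induction on $k$ the structural facts
\[
A_k = \mathbb{I} + (a-1)E_{1,1}, \quad C_k = \mathbb{I} + (c-1)E_{2^k, 2^k},
\]
and, writing $N_k := B_k - \mathbb{I}$, the identities $N_k^k = k!\,E_{1, 2^k}$ and $N_k^{k+1}=0$. The nilpotency claim follows from the tensor decomposition $N_{k+1} = \mathbb{I}_2 \otimes N_k + N \otimes \mathbb{I}_{2^k}$ with $N = E_{12}$: since $N^2 = 0$, the binomial expansion gives $N_{k+1}^j = \mathbb{I}\otimes N_k^j + j\, N\otimes N_k^{j-1}$.

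For part (1), set $D_j := d_j(\rho_k(z))$ in $GL(2^k)$, so that $D_1 = C_k$, $D_{j+1} = [B_k, D_j]$, and $\rho_k(v_{j+1}) = [A_k, D_j]$ for $j \ge 1$ (while $\rho_k(v_1) = \rho_k(\delta) = \mathbb{I}$ by definition). The crucial closed form is
\[
D_j = \mathbb{I} + (1 - 1/c)\, N_k^{j-1} E_{2^k, 2^k}, \qquad j \ge 2,
\]
which I would prove by induction. Two small identities make the computation collapse: $E_{2^k, 2^k} B_k^{\pm 1} = E_{2^k, 2^k}$ (the last row of $B_k^{\pm 1}$ is $(0,\dots,0,1)$) and $E_{2^k, 2^k} N_k = 0$ (the last row of $N_k$ is zero). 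The second forces $(D_j - \mathbb{I})^2 = 0$, hence $D_j^{-1} = 2\mathbb{I} - D_j$, and the induction step reduces to the linear identity $(B_k - \mathbb{I})\,N_k^{j-1} = N_k^j$. From the explicit formula, the row-$1$ entry of $D_j - \mathbb{I}$ is $(1 - 1/c)(N_k^{j-1})_{1, 2^k}$, which vanishes for $2 \le j \le k$ and equals $(1 - 1/c)k!$ for $j = k+1$. Since $D_j - \mathbb{I}$ is supported entirely in column $2^k$ (so its first column is zero), $D_j$ commutes with the diagonal $A_k$ whenever its row-$1$ entries vanish. Combining this with $D_1$ being diagonal and $D_j = \mathbb{I}$ for $j \ge k+2$, we obtain $\rho_k(v_i) = \mathbb{I}$ for all $i \ne k+2$; while for $j = k+1$ a direct computation using $E_{1,2^k}^2 = 0$ yields $\rho_k(v_{k+2}) = \mathbb{I} + (a-1)(1-1/c)k!\,E_{1, 2^k} \ne \mathbb{I}$.

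For part (2), observe that each $\rho_k(\delta_i)$ is upper triangular, so $\rho_k(\pi_1)$ lies in the Borel subgroup of $GL(2^k)$. Define the character $\chi(g) := g_{1,1}/g_{2^k, 2^k}$; on generators one computes $\chi(\rho_k(\delta_0)) = c/a$, $\chi(\rho_k(\delta_1)) = a$, $\chi(\rho_k(\delta_2)) = 1$, $\chi(\rho_k(\delta_3)) = 1/c$, so $\chi(\rho_k(\pi_1))$ is the multiplicative group $\langle a, c\rangle \subset \bbC^*$. For any upper triangular $g$ one has $g E_{1, 2^k} g^{-1} = \chi(g) E_{1, 2^k}$, and hence, using $E_{1,2^k}^2 = 0$,
\[
[\rho_k(v_{k+2}), g] = \mathbb{I} + \mu(1 - \chi(g))\, E_{1, 2^k}, \qquad \mu := (a-1)(1-1/c)k!,
\]
while conjugation by $h \in \rho_k(\pi_1)$ multiplies the $(1, 2^k)$-coefficient by $\chi(h)$. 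The one-parameter subgroup $\{\mathbb{I} + \nu E_{1, 2^k}\} \cong (\bbC, +)$ being abelian, it follows that $\rho_k(K) = \{\mathbb{I} + \mu \nu E_{1, 2^k} : \nu \in J\}$ where $J \subset \bbZ[a^{\pm 1}, c^{\pm 1}]$ is the image under evaluation at $(a,c)$ of the augmentation ideal $I_{\mathrm{aug}} = (1-a, 1-c) = (1-a, 1-c/a, 1-1/c)$. Since every element of $I_{\mathrm{aug}}$ vanishes at $(a,c) = (1,1)$, the quotient $\bbZ[a^{\pm 1}, c^{\pm 1}]/I_{\mathrm{aug}} \cong \bbZ$ and $1 \notin I_{\mathrm{aug}}$. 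For $(a, c)$ algebraically independent over $\bbQ$ (or, more generally, outside a suitable countable union of proper subvarieties), the evaluation map is injective on $I_{\mathrm{aug}}$, so $\mu \notin \mu J$ and therefore $\rho_k(v_{k+2}) = \mathbb{I} + \mu E_{1, 2^k} \notin \rho_k(K)$.

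The main obstacle I anticipate is the inductive computation of $D_j$: although the final formula is remarkably clean, verifying the induction step requires carefully isolating the two small identities $E_{2^k, 2^k} B_k^{\pm 1} = E_{2^k, 2^k}$ and $E_{2^k, 2^k} N_k = 0$, which together make multiplicative commutators coincide exactly (not merely modulo higher order) with additive Lie brackets in this very particular setup. In part (2) the main subtlety is verifying that the normal closure of the commutators coincides with the subgroup they generate, which works here precisely because everything lands in the abelian one-parameter subgroup $\{\mathbb{I} + \nu E_{1,2^k}\}$ and the character $\chi$ preserves it.
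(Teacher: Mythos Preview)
Your approach is essentially the paper's: both compute the iterated commutators $[B_k,[\dots,[B_k,C_k]\dots]]$ explicitly and then bracket with $A_k$, the paper using tensor-product bookkeeping with the $2\times 2$ blocks $F_2,E_2,J_2$ while you use matrix units $E_{i,j}$; and for part~(2) both reduce to the impossibility of writing $1$ as an integer combination of monomials $a^m c^n$ with total coefficient sum zero, which you package as ``$1\notin I_{\mathrm{aug}}$'' and the paper writes out directly. One step you assert but do not justify is that $(N_k^{\,j-1})_{1,2^k}=0$ for $1\le j-1<k$: this does not follow from $N_k^k=k!\,E_{1,2^k}$ alone, but it drops out immediately from your own binomial identity $N_{k+1}^{\,j}=\mathbb{I}\otimes N_k^{\,j}+j\,N\otimes N_k^{\,j-1}$ by reading off the $(1,2^{k+1})$ entry and inducting on $k$ (this is exactly the paper's identity $\alpha\epsilon^{[l]}=0$ for $l<k$, coming from $F_2E_2=0$).
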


\begin{remark}
	As $\pi_1$ is a free group and $\gamma,\delta,x,\delta_2,z$ are its 
	generators, such a representation $\rho_k$ exists.
\end{remark}

\begin{proof}

We start with  another description of $A_k,B_k$ and $C_k$. Let
$$\I_2=\begin{bmatrix}
1& 0\\
0& 1
\end{bmatrix},\quad
J_2=\begin{bmatrix}
0& 1\\
0& 0
\end{bmatrix},\quad
 E_2=\begin{bmatrix}
0& 0\\
0& 1
\end{bmatrix},\quad
F_2=\begin{bmatrix}
1& 0\\
0& 0
\end{bmatrix}.
$$
Recall that tensor products are 
multiplied factorwise:
$$
(X_1\otimes Y_1)(X_2\otimes Y_2)=X_1X_2\otimes Y_1Y_2.
$$

Denote  by $b_{j_1\dots j_l}$ to be  the tensor product of  $(k-l)$ 
copies of $\I_2$ and  $l$ copies of
$J_2$, with $J_2$ being exactly the $j_1^{\text{th}},\dots,j_l^{\text{th}}$ 
factors. Similarly, denote  by $e_{j_1\dots j_l}$ to be  the tensor product of  
$(k-l)$ 
copies of $E_2$ and  $l$ copies of
$J_2$, with $J_2$ being exactly the $j_1^{\text{th}},\dots,j_l^{\text{th}}$ 
factors.
Finally, denote  $\alpha=F_2^{\otimes k}$, $\gamma=E_2^{\otimes k}$ and 
$\beta=\sum_{j=1}^k b_j$.

Using this notations, we have
\begin{equation}
A_k=\I_{2^k}+(a-1)\alpha,\quad B_k=\I_{2^k}+\beta
\quad
\text{and} \quad C_k=\I_{2^k}+(c-1)\gamma.
\end{equation}

Our immediate goal is to compute $[B_k,C_k]$. 
Evidently,
\begin{equation}
A_k^{-1}=\I_{2^k}+(\tfrac 1 a -1)\alpha, \quad C_k^{-1}=\I_{2^k}+(\tfrac 1 c 
-1)\gamma. 
\end{equation}	
	
	As $J_2^2=0$, we have $b_{j_1\dots j_l}b_{j'_1\dots j'_{l'}}$ equals 
	$b_{j_1\dots j_lj'_1\dots j'_{l'}}$ if the sets $\{j_1\dots 
	j_l\}$,$\{j'_1\dots j'_{l'}\}$ do not intersect,  and zero otherwise.
	
	Therefore	
	\begin{equation}	\beta^l=l!\sum_{1\le j_1<\dots<j_l\le k} b_{j_1\dots 
		j_l}, \quad\beta^k=k!J_2^{\otimes k}\quad\text{and}\quad 
		\beta^{k+1}=0.                    
	\end{equation}
	In particular, 
	$$
	B_k^{-1}=\I_{2^k}+\tilde{\beta},\quad\text{where}\quad\tilde{\beta}=
	-\beta+\beta^2 	-\dots\pm\beta^k.
	$$
	
	Now, from $N_2^2=N_2, N_2J_2=0, J_2N_2=J_2$
		we have
	\begin{equation}
	\gamma^2=\gamma,\quad \gamma\beta^l=\gamma\tilde{\beta}=0,
	\end{equation}
	and
	\begin{equation}
	\beta^l\gamma=\epsilon^{[l]}=l!\sum_{1\le j_1<\dots<j_l\le k} 
	e_{j_1\dots 		j_l},
	\end{equation}

	Note that $\epsilon^{[l]}\epsilon^{[l']}=0$ for all $l,l'\ge 1$,
	$\epsilon^{[k]}=k!J_2^{\otimes k}$  and $\epsilon^{[l]}=0$ for $l>k$.

	Now, using the above formulae we see that 
	\begin{align}\label{eq:v_2}
	[B_k,C_k]=&(\I_{2^k}+\beta)(\I_{2^k}+(c-1)\gamma) 
	(\I_{2^k}+\tilde{\beta})(\I_{2^k}+(\tfrac 1 c 
	-1)\gamma)\nonumber\\
=	&
	\I_{2^k}-(\tfrac 1 c -1)\beta\gamma=\I_{2^k}-(\tfrac 1 c 
	-1)\epsilon^{[1]},
	\end{align}
	and, as $\left(\epsilon^{[1]}\right)^2=0$, 
	\begin{equation}
	[B_k,C_k]^{-1}=\I_{2^k}+(\tfrac 1 c 
	-1)\epsilon^{[1]}.
	\end{equation}
	Continuing,
	\begin{equation}
	[B_k,[B_k,C_k]]=\I_{2^k}-(\tfrac 1 c 
	-1)\beta\epsilon^{[1]}=\I_{2^k}-(\tfrac 
	1 c 
	-1)\epsilon^{[2]},
	\end{equation}
	and, by induction, for a commutator with $l$ entries of $B_k$,
	\begin{equation}
	[B_k,[\dots[B_k,C_k]]\dots]=\I_{2^k}-(\tfrac 1 c 
	-1)\epsilon^{[l]}.
	\end{equation}
	Now,
	similarly, from 
	\begin{equation}
	F_2^2=F_2,\quad F_2J_2=J_2,\quad  J_2F_2=0
	\end{equation}
	we have
	\begin{align*}
	\alpha^2&=\alpha,
	&&\epsilon^{[l]}\alpha=0  \text{ for all } l,\\
\alpha\epsilon^{[k]}&=\epsilon^{[k]}=k!J_2^{\otimes k},	&&\alpha\epsilon^{[l]}=0
 \text{ for }  l\neq k.
		\end{align*}
		
	Therefore
	\begin{equation}
	\rho_k(v_{l+2})=[A_k,[B_k,[\dots[B_k,C_k]]\dots]]=\I_{2^k}+(\tfrac 1 
	c-1)(\tfrac 1 
	a 
	-1)\alpha\epsilon^{[l]},
	\end{equation}
	i.e.
\begin{align}\label{eq:rho_k(v_l)}
	\rho_k(v_{l+2})&=\I_{2^k}\qquad\qquad \text{ for   } \quad l+2\neq k,& 
	\nonumber\\
	\rho_k(v_{k+2})&=\I_{2^k} +(\tfrac 1 c-1)(\tfrac 
	1 a 
	-1)k!J_2^{\otimes k}\neq \I_{2^k},&
\end{align}
	which proves the first claim of Proposition~\ref{prop:representationABC}.
	
	Let $s=\prod g_i^{m_i}\in\pi_1,$ where $g_i\in\{\gamma,\delta,x,\delta_2,z\}$ and $m_i\in\Z$.
	Then $\rho_k(s)=D+U$, where $U$ is a strictly upper triangular matrix and 
	$D=\operatorname{diag}(a^m,1,\dots,1,c^n)$.
	Therefore 
	\begin{equation}\label{eq:comm of repr}
	[\rho_k(s),\rho_k(v_{k+2})]=\I_{2^k}+\left(\tfrac{a^m}{c^{n}}-1\right)(\tfrac 1 c-1)(\tfrac 
	1 a 
	-1)k!J_2^{\otimes k}.
	\end{equation}
	
	Now, assume \begin{equation}\label{eq:v_k+2 impossible}
	\rho_k(v_{k+2})=\prod_j [\rho_k(s_j),\rho_k(v_{k+2})].
	\end{equation} By \eqref{eq:rho_k(v_l)},\eqref{eq:comm of 
	repr}, 
	we have
	\begin{equation}
\I_{2^k} +(\tfrac 1 c-1)(\tfrac 1 a -1)k!J_2^{\otimes k}=
\I_{2^k} +\left(\sum\left(\tfrac{a^{m_j}}{c^{n_j}}-1\right)\right)
(\tfrac 1 c-1)(\tfrac 1 a -1)k!J_2^{\otimes k},
	\end{equation}
	or, equivalently, $1=\sum\left(\tfrac{a^{m_j}}{c^{n_j}}-1\right).$
Collecting similar terms, we  get 
$$ 
\sum\lambda_i\tfrac{a^{m_i}}{c^{n_i}}=1+\sum\lambda_i, \quad\text{where}\quad\lambda_i,m_i, n_i\in\Z,$$
and for any $i$ one of the exponents $m_i,n_i$ is non-zero.  
This cannot hold for all 
$a,c$: if the left hand side is a constant, then all $\lambda_i$ vanish, and we get $0=1$. 
Therefore  any representation \eqref{eq:v_k+2 impossible} fails on a Zariski 
open subset of $\C_{(a,c)}$, so all such 
representations fail for a generic choice of $a,c$.
\end{proof}

\begin{proof}[Proof of Theorem~\ref{thm:depth}] By Corollary~\ref{cor:generators of K} and 
Proposition~\ref{prop:representationABC}(1), we have 
	$\rho_k(K)=[\rho_k(\pi_1),\rho_k(v_{k+2})]$.  By Proposition~\ref{prop:representationABC}(2), 
	$\rho_k(v_{k+2})\notin[\rho_k(\pi_1),\rho_k(v_{k+2})]$ for a generic choice of $a,c$. This means that 
	$\left(\OO\cap L_{k+2}\right)\setminus K$ contains $v_{k+2}$, so is non-empty for all $k$.
\end{proof}

\section{First nonzero Melnikov function of length 3}
\subsection{Cohomologies: notations.}
Denote $f_1=x+1$, $f_2=y-1$, $f_3=x-1$ and $f_4=y+1$. Denote  $\phi_i=\log f_i$ and $\eta_i=d\phi_i=\tfrac{df_i}{f_i}$.

The cycles $\gamma,\delta, \delta_1,\delta_2,\delta_3$ form a basis of 
$H_1(\Gamma_t)$, and $\gamma,\delta$ form 
a basis of the orbit of $\gamma$ in $H^1(\Gamma_t)$.  
As $ \phi_i$ are univalued on $\gamma$, the restrictions to $\Gamma_t$ of polynomial forms $\{F\eta_i\}_{i=1}^3$ 
lie in the orthogonal complement $\OO^\perp\subset H^1(\Gamma_t)$ of the orbit $\OO_1\subset H_1(\Gamma_t)$ of $\gamma$ 
in $H_1(\Gamma_t)$, and in fact form its basis. We have 
\begin{equation}
\begin{array}{lll}
\int_{\delta_1}\eta_1=0,&\int_{\delta_1}\eta_2=0,&\int_{\delta_1}\eta_3=2\pi i 
\\ 
\int_{\delta_2}\eta_1=0,&\int_{\delta_2}\eta_2=2\pi  i,
&\int_{\delta_2}\eta_3=-2\pi i\\
\int_{\delta_3}\eta_1=2\pi i,
&\int_{\delta_3}\eta_2=-2\pi i,&\int_{\delta_3}\eta_3=0.
\end{array}\label{eq:delta eta pairing}
\end{equation}
Note that $d\eta_i=0$, so the Gelfand-Leray
derivatives $\frac{d\eta_i}{dF}$ vanish.     

\subsection{Linear perturbations}
Consider the rational 1-form of type \eqref{eq:omega type}, i.e.
\begin{equation}\label{omega}
\omega=a_1(F){\eta_1}+a_2(F){\eta_2}+a_3(F){\eta_3},
\end{equation}
where $a_i(t)$ are  holomorphic on $\tau$, and consider 
the perturbation 
\begin{equation}
dF+\epsilon\omega=0,\quad F=(x^2-1)(y^2-1).
\end{equation}

\begin{remark}
	Note that restriction to $\Gamma_t$ of any form $\omega$  such that 
	$\int_{\gamma(t)}\omega\equiv0$ is cohomologous to a linear combination of 
	$\eta_i$. 
\end{remark}

The Poincar\'e map along the cycles $\gamma$ is 
\begin{equation}
P_{\gamma}(t)=t+\epsilon M_{\gamma,1}(t)+\epsilon^2M_{\gamma,2}(t)
+\epsilon^3M_{\gamma,3}(t)+\cdots,
\end{equation}
with $M_{\gamma,1}(t))=\int_{\gamma(t)}\omega\equiv 0$. Our goal is to find a 
polynomial form $\omega$
providing the highest possible order of the first non-vanishing Melnikov 
function $M_{\gamma,i}(t)$
\begin{proposition}\label{prop:pert3}
$M_{\gamma,2}\equiv 0$ and $M_{v_3,3}\not\equiv 0$ if, and only if,
\begin{equation}
\begin{aligned}
a_3(t)&=\alpha_1\int_{0}^{t}\frac{\alpha_2(\tau)}{\alpha_1^2(\tau)}d\tau
+c_0\alpha_1\\
a_1(t)&=a_3(t)+\alpha_1(t)\\
a_2(t)&=\lambda\alpha_1, 
\end{aligned}\label{thm:ConditionsM2(v_2)=0,M_3(v_3)not0}
\end{equation}
where $\lambda\in\C^*$ and $\alpha_1(t)$ and $\alpha_2(t)$ are linearly 
independent functions over 
$\C$, and $\alpha_1$ is not constant.

\end{proposition}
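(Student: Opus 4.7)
The plan is to compute both $M_{\gamma,2}$ and $M_{v_3,3}$ explicitly in $a_1,a_2,a_3$ using Proposition~\ref{prop:BCHformula}, which converts Melnikov coefficients along elements of $\pi_1$ into iterated integrals of $\omega$ over $\Gamma_t$, and then to match the resulting vanishing/non-vanishing data to the parametric form claimed in~\eqref{thm:ConditionsM2(v_2)=0,M_3(v_3)not0}.

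First I would compute $M_{\gamma,2}$. Because $\int_\gamma\eta_i = 0$, each primitive $\phi_i = \log f_i$ is univalued on a tubular neighbourhood of $\gamma$, so on that neighbourhood we have the Francoise-type decomposition $\omega = dG + H\,dF$ with $G = \sum_i a_i(F)\phi_i$ and $H = -\sum_i a_i'(F)\phi_i$. Since $M_{\gamma,1}\equiv 0$, Francoise's recursion yields $M_{\gamma,2}(t) = \int_\gamma H\omega$, and after using the antisymmetry $\int_\gamma\phi_i\eta_j = -\int_\gamma\phi_j\eta_i$ obtained from $\oint_\gamma d(\phi_i\phi_j) = 0$, this collapses to a linear combination of the three Wronskians $W(a_i,a_j) := a_i'a_j - a_j'a_i$ with coefficients $c_{ij}(t) := \int_\gamma\phi_i\eta_j$; the periods $c_{ij}(t)$ can be evaluated by splitting $\gamma$ into the four arcs $\rho_0\rho_1\rho_2\rho_3$ from Lemma~\ref{lem:monodromy} and using~\eqref{eq:delta eta pairing}. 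The vanishing $M_{\gamma,2}\equiv 0$ is then a single linear functional relation on the three Wronskians. Introducing $\alpha_1 := a_1 - a_3$ and $\alpha_2 := \alpha_1 a_3' - \alpha_1'a_3$ (so that $\alpha_2/\alpha_1^2 = (a_3/\alpha_1)'$), this relation separates into $a_2 = \lambda\alpha_1$ for some $\lambda\in\C$ and an ODE for $a_3/\alpha_1$, whose general solution is the integral formula for $a_3$ in~\eqref{thm:ConditionsM2(v_2)=0,M_3(v_3)not0}.

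Next I would compute $M_{v_3,3}$ using the same BCH machinery. Since $v_3 = [x,[\delta_2,z]] \in L_3$, Proposition~\ref{prop:BCHformula} yields $Z_{v_3} = \epsilon^3[X_x,[X_{\delta_2},X_z]] + O(\epsilon^4)$, where $X_\sigma := \bigl(\int_\sigma \omega\bigr)\partial_t$ is the leading Abelian-integral vector field on $\tau$. By~\eqref{eq:delta eta pairing} the three functions $\int_x\omega, \int_{\delta_2}\omega, \int_z\omega$ are explicit $\C$-linear combinations of $a_1,a_2,a_3$, and the triple Lie bracket $[h_1\partial_t,[h_2\partial_t,h_3\partial_t]]$ reduces to a third-order Wronskian in $h_1,h_2,h_3$. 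Substituting the parametrisation from the previous step, $M_{v_3,3}(t)$ collapses to a constant multiple of $\lambda(\alpha_1\alpha_2' - \alpha_1'\alpha_2)$, and is nonzero precisely when $\lambda\neq 0$, $\alpha_1$ is non-constant, and $\alpha_1,\alpha_2$ are linearly independent --- exactly the remaining content of~\eqref{thm:ConditionsM2(v_2)=0,M_3(v_3)not0}.

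The hard part will be controlling the $O(\epsilon^4)$ remainder in the BCH expansion of $Z_{v_3}$: a priori the quadratic corrections of $Z_x, Z_{\delta_2}, Z_z$ could re-enter at order $\epsilon^3$ inside the triple bracket. The hypothesis $M_{\gamma,2}\equiv 0$ is precisely what kills those cross-terms, because the same Francoise recursion along sub-paths of $x,\delta_2,z$ is governed by the same Wronskian relation derived in the first step; however, making this cancellation bookkeeping explicit so that only the leading term $[X_x,[X_{\delta_2},X_z]]$ survives --- and checking that the multivalued nature of $\phi_i$ does not obstruct the extension of the Francoise decomposition from a neighbourhood of $\gamma$ to neighbourhoods of the relevant sub-paths --- is the most delicate point in the argument.
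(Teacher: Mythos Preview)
Your overall strategy matches the paper's: compute $M_{\gamma,2}$ via Fran\c coise's recursion to get a Wronskian relation, and compute $M_{v_3,3}$ via Proposition~\ref{prop:BCHformula} as a nested Wronskian. Two points need correction.

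First, your proposed evaluation of $c_{ij}(t)=\int_\gamma\phi_i\eta_j$ by splitting $\gamma$ into the arcs $\rho_0,\dots,\rho_3$ and invoking the pairing~\eqref{eq:delta eta pairing} will not work: that table gives only the simple periods $\int_{\delta_k}\eta_j$, which do not determine the length-two iterated integrals $\int_\gamma\eta_i\eta_j$. The paper instead uses an elementary Cauchy argument (Lemma~\ref{lem:Cauchy}): since $\eta_1,\eta_3$ depend only on $x$, one gets $c_{13}\equiv 0$ directly, and rewriting $\phi_1+\phi_3=\log(x^2-1)=\log t-\log(y^2-1)$ on $\Gamma_t$ yields $c_{12}=c_{23}$. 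This collapses $M_{\gamma,2}$ to a multiple of $W(a_1,a_2)+W(a_2,a_3)=-W(a_2,a_1-a_3)=-M_{v_2,2}$; the paper records this equivalence as Lemma~\ref{lem:M22 equivalent Mgamma2}. Note also that the vanishing $M_{\gamma,2}\equiv 0$ yields only the single relation $a_2=\lambda\alpha_1$; the integral formula for $a_3$ is not an additional ODE extracted from that vanishing but a tautological reparametrisation obtained by \emph{defining} $\alpha_2:=W(a_1,a_3)$ and solving $(a_3/\alpha_1)'=\alpha_2/\alpha_1^2$. The genuine constraints on $\alpha_1,\alpha_2$ (linear independence, $\alpha_1$ non-constant, $\lambda\neq 0$) all come from $M_{v_3,3}\not\equiv 0$.

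Second, the ``hard part'' you flag is not present. Proposition~\ref{prop:BCHformula} applies to any commutator $[\gamma_1,\gamma_2]$ and gives the order-$(\mu_1+\mu_2)$ term as the Wronskian of the leading coefficients; the quadratic corrections in $P_x,P_{\delta_2},P_z$ land at order $\ge\epsilon^4$ in $P_{v_3}$ by the commutator structure itself, not by virtue of $M_{\gamma,2}\equiv 0$. Iterating the proposition twice gives $M_{v_3,3}=W(M_{x,1},W(M_{\delta_2,1},M_{z,1}))$ directly, and this identity remains valid (both sides zero) when the inner Wronskian vanishes. There is no multivaluedness obstruction either, since Proposition~\ref{prop:BCHformula} uses only the Abelian integrals $\int_\sigma\omega$ and never requires extending the Fran\c coise decomposition beyond a neighbourhood of $\gamma$.
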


To prove Proposition~\ref{prop:pert3}, we consider the second and third 
variations of $\gamma$, i.e. $v_2$ and $v_3$ from Proposition 
\ref{prop:variations}, and the corresponding Poincar\'e maps $P_{v_2}$ and 
$P_{v_3}$. Proposition~\ref{prop:BCHformula} implies that
\begin{equation}\label{Poincaremap:P_{v_i}}
\begin{array}{rl}
P_{v_2}(t)&=t+\epsilon^2M_{v_2,2}(t)+O(\epsilon^3),\\
P_{v_3}(t)&=t+\epsilon^3M_{v_3,3}(t)+O(\epsilon^4),
\end{array} 
\end{equation}
and provides explicit expression of $M_{v_2,2}(t), M_{v_3,3}(t)$ in terms of 
coefficients $a_i(F)$. This allows to find  conditions on $a_i$ guaranteeing 
$M_{v_3,3}(t)\not\equiv 0$ and 
$M_{v_2,2}(t)\equiv 0$. We prove that the last condition is equivalent to 
$M_{\gamma,2}(t)\equiv 0$ in Lemma~\ref{lem:M22 equivalent Mgamma2}.

\begin{remark}[Geometric interpretation of Proposition~\ref{prop:pert3}]
	The forms \eqref{omega} form a
	three dimensional module 
	$\Omega$ over the ring of germs of holomorphic functions at $t$. The  
	Poincar\'e map along $\gamma$ is a map 
	$\mathcal{P}_\gamma:U\subset\Omega\to \mathcal{H}ol(\tau)$, 
	where $\mathcal{H}ol(\tau)$ is the set of germs of holomorphic mappings 
	$g:(\tau,p_0)\to \tau$. 
	
    The perturbations \eqref{def} are germs of lines in 
	$\Omega$, and the order of the first non-zero 
	Melnikov function of the perturbation can be interpreted as  the order of 
	vanishing of 
	$\mathcal{P}$ on these lines, i.e.  the 
	order of tangency of these lines to the set $\{\mathcal{R}=0\}$ of 
	integrable perturbations. 
	Theorem~\ref{thm:M3} claims that the maximum order of this tangency 
	is either at most three or the line lies 
	entirely in   $\{\mathcal{R}=0\}$.
	
	To construct the perturbations with first non-zero Melnikov function $M_k$ 
	of higher length, we necessarily have to 
	increase $k$, i.e. the order of tangency of the perturbation with the set of integrable foliations. This means that 
	we have either to consider non-linear perturbations, i.e. germs 
	of curves in $\Omega$, or consider a wider class $\tilde{\Omega}$ of 
	perturbations, e.g. by including relatively exact forms. 
	
	Still,  the first non-zero Melnikov function of a non-linear perturbation 	
	\begin{equation}
	dF+\epsilon\omega_1+\epsilon^2\omega_2+...=0,\quad \omega_i\in\Omega
	\end{equation}
	can be of high order, but of small length.  
	It is easy to see that  the terms of highest length of the corresponding 
	Melnikov functions depend only on $\omega_1$.  Thus, to 
	ensure that the length of the first non-vanishing Melnikov functions is at 
	least $4$, we should take $\omega_1$ such 
	that $dF+\epsilon\omega_1=0$ is integrable (otherwise $M_3\neq0$), and find non-linear terms in such a way  that 
	$M_{\gamma,4}\equiv 0$ (as its longest terms are determined by $\omega_1$, 
	they 
	necessarily vanish, so its length could be at most $3$), but 
	$M_{\gamma,5}\neq0$ and 
	has length $4$ (it 	cannot be of length $5$ by the same reason). The 
	latter would follow from 
	$M_{v_4,5}\neq0$. This program can be realized,  but it is computationally 
	hard. Moreover, it is not clear how one can generalize this approach to 
	higher length, so we omit the computations.
\end{remark}

\subsection{Poincar\'e maps as time-one flows of  vector fields on the 
transversal}
Consider the family \eqref{def} as a one-dimensional foliation \begin{equation}
\mathcal{F}=\{dF+\epsilon\omega=0, \, d\epsilon=0\}
\end{equation}
in $\C^3_{x,y,\epsilon}$. Let $\Theta=\tau\times (\C_\epsilon,0)$ be a 
transversal to the algebraic leaf $\Gamma=\Gamma_t\times \{0\}$ at the point 
$(p_0,0)$, and denote $\mathcal{D}=Diff((\Theta,(p_0,0)))$ the group of germs 
of 
holomorphic diffeomorphisms of $\Theta$. Holonomy of $\mathcal{F}$ along 
various 
paths $\gamma\in\pi_1(\Gamma, (p_0,0))$  defines a representation 
$\tilde{P}:\pi_1\to \mathcal{D}$ 
preserving $\epsilon$, i.e.  
$\tilde{P}_\gamma:(x,\epsilon)\to(P_\gamma(x,\epsilon),\epsilon)$ for any 
$\gamma\in\pi_1$.

Define $v_\gamma=(d\tilde{P}_\gamma)(\partial_\epsilon)$, 
$v_e=\partial_\epsilon$, and let $\phi_\gamma^s$ be the $s$-time flow of 
$v_\gamma$ 
(necessarily $L_{v_\gamma}(\epsilon)=1$). By definition, $\tilde{P}_\gamma$ 
conjugates flows of $v_e$ and $v_\gamma$. In particular, for all $p\in\tau$
\begin{equation}
\tilde{P}_\gamma(p,\epsilon)=\tilde{P}_\gamma(\phi_e^\epsilon(p,0))
=\phi_\gamma^\epsilon(\tilde{P}_\gamma(p,0))=\phi_\gamma^\epsilon(p,0),
\end{equation}
as $\tilde{P}_\gamma(p,0)\equiv(p,0)$ for all $\gamma\in\pi_1$.

Let $(t=F(p), \epsilon)$ be parameterization of $\Theta$. The expansion 
\eqref{P} is the expansion of $\tilde{P}_\gamma$ in degrees of $\epsilon$,
\begin{equation}
\tilde{P}_\gamma(t,\epsilon)=(t+\epsilon^\mu 
M_{\gamma,\mu}(t)+o(\epsilon^\mu),\epsilon). 
\end{equation}
Let 
\begin{equation}
v_\gamma=(v_\gamma^0(t)+\epsilon v_\gamma^1(t)+...)\partial t+\partial_\epsilon
\end{equation}
be decomposition of $v_\gamma$. Evidently, 
$v_\gamma^0=\dots=v_\gamma^{\mu-2}\equiv0$, and 
$v_\gamma^{\mu-1}(t)=M_{\gamma,\mu}(t)$.
\begin{proposition}\label{prop:BCHformula}
	Let $\gamma_1,\gamma_2,\gamma=[\gamma_1,\gamma_2]\in\pi_1$, and let 
	$$
	P_{\gamma_i}(t,\epsilon)=t+\epsilon^{\mu_i}M_{\gamma_i,\mu_i}+o(\epsilon^{\mu_i}),
	\,\,i=1,2,$$ with $M_{\gamma_1,\mu_1},M_{\gamma_2,\mu_2}\not\equiv0$. 
	
	Then 
	$v_\gamma^0=\dots=v_\gamma^{\mu-2}\equiv0$ for $\mu=\mu_1+\mu_2$, and 
\begin{equation}
	v_{[\gamma_1,\gamma_2]}^{\mu-1}(t)=W(M_{\gamma_1,\mu_1}(t),M_{\gamma_2,\mu_2}(t)),
\end{equation} where 
	$W(f,g)=fg'-f'g$ denotes the Wronskian of $f,g$. 
\end{proposition}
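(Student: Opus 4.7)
The plan is to exploit the dictionary between the representation $\tilde{P}\colon\pi_1\to\mathcal{D}$ and its associated vector fields by computing the commutator $[\tilde{P}_{\gamma_1},\tilde{P}_{\gamma_2}]$ via a Baker--Campbell--Hausdorff argument in the group of near-identity diffeomorphisms of the transversal $\tau$. Since $\tilde{P}$ is a group homomorphism into the $\epsilon$-preserving subgroup of $\mathcal{D}$, one has $\tilde{P}_{[\gamma_1,\gamma_2]}=\tilde{P}_{\gamma_1}\tilde{P}_{\gamma_2}\tilde{P}_{\gamma_1}^{-1}\tilde{P}_{\gamma_2}^{-1}$, and fixing $\epsilon$ one may view each $\tilde{P}_{\gamma_i}^\epsilon$ as a germ of near-identity diffeomorphism of $\tau$ with $\tilde{P}_{\gamma_i}^\epsilon(t)=t+\epsilon^{\mu_i}M_{\gamma_i,\mu_i}(t)+O(\epsilon^{\mu_i+1})$.

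Next, I would compute this commutator through the formal logarithm. In the pro-unipotent group of formal near-identity diffeomorphisms, each $\tilde{P}_{\gamma_i}^\epsilon$ admits a formal logarithm $X_i=\log\tilde{P}_{\gamma_i}^\epsilon$, and the expansion above gives $X_i=\epsilon^{\mu_i}M_{\gamma_i,\mu_i}\partial_t+O(\epsilon^{\mu_i+1})$. The BCH formula then reads
\begin{equation*}
\log\bigl[\tilde{P}_{\gamma_1}^\epsilon,\tilde{P}_{\gamma_2}^\epsilon\bigr]=[X_1,X_2]+(\text{higher iterated brackets}),
\end{equation*}
whose leading term is computed via the standard identity $[f\partial_t,g\partial_t]=(fg'-f'g)\partial_t=W(f,g)\partial_t$:
\begin{equation*}
[X_1,X_2]=\epsilon^{\mu}\,W(M_{\gamma_1,\mu_1},M_{\gamma_2,\mu_2})\,\partial_t+O(\epsilon^{\mu+1}).
\end{equation*}
Each higher iterated bracket involves at least three copies of the $X_i$ and hence has $\epsilon$-order at least $\mu+\min(\mu_1,\mu_2)>\mu$, so it does not contribute to the coefficient at $\epsilon^\mu$. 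Exponentiating yields $\tilde{P}_{[\gamma_1,\gamma_2]}^\epsilon(t)=t+\epsilon^\mu W(M_{\gamma_1,\mu_1},M_{\gamma_2,\mu_2})(t)+O(\epsilon^{\mu+1})$; the vanishing $v_\gamma^0=\dots=v_\gamma^{\mu-2}\equiv 0$ is then immediate from the absence of lower-order terms, and the Wronskian formula for $v_\gamma^{\mu-1}$ follows from the relation $v_\gamma^{\mu-1}=M_{\gamma,\mu}$ recorded just before the statement.

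The main obstacle I anticipate is the careful bookkeeping needed to verify that the higher BCH brackets really are of sub-leading order and that all intermediate contributions from the expansions of $\tilde{P}_{\gamma_i}^{\pm\epsilon}$ at orders $\epsilon^{\mu_i},\epsilon^{\mu_i+1},\dots,\epsilon^{\mu-1}$ actually cancel inside the commutator. The formal-logarithm route packages this cancellation neatly, but requires justifying the pro-unipotent formalism for germs of diffeomorphisms depending on $\epsilon$. An alternative and self-contained, though more tedious, route is to expand $\tilde{P}_{\gamma_1}^\epsilon\tilde{P}_{\gamma_2}^\epsilon\tilde{P}_{\gamma_1}^{-\epsilon}\tilde{P}_{\gamma_2}^{-\epsilon}$ directly as a power series in $\epsilon$ up to order $\mu$ and to check by hand that every term of order below $\mu$ cancels while the Wronskian survives at order $\epsilon^\mu$; the underlying mechanism is the same, namely the identity $[f\partial_t,g\partial_t]=W(f,g)\partial_t$.
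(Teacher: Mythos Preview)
Your proposal is correct and takes a genuinely different route from the paper. The paper proceeds by the direct expansion that you mention only as a fallback: it writes out $P_{\gamma_1}\circ P_{\gamma_2}$ and $P_{\gamma_2}\circ P_{\gamma_1}$ keeping all Melnikov coefficients $M_{\gamma_i,j}$ for $\mu_i\le j\le\mu$, observes that the two compositions differ precisely by $\epsilon^{\mu}\bigl(M_{\gamma_1,\mu_1}M_{\gamma_2,\mu_2}'-M_{\gamma_1,\mu_1}'M_{\gamma_2,\mu_2}\bigr)+o(\epsilon^{\mu})$, and then applies $P_{\gamma_2}^{-1}\circ P_{\gamma_1}^{-1}$, noting that its derivative is $1+O(\epsilon)$ so the leading term survives unchanged. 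Your BCH argument packages the same cancellation more conceptually: once one knows $\log\tilde P_{\gamma_i}^\epsilon=\epsilon^{\mu_i}M_{\gamma_i,\mu_i}\partial_t+O(\epsilon^{\mu_i+1})$, the single Lie bracket already gives the Wronskian at order $\epsilon^{\mu}$, and all higher BCH terms are automatically pushed beyond $\epsilon^{\mu}$ by an order count. What the paper's approach buys is that it is entirely elementary (no formal logarithm, no pro-unipotent formalism to justify) and makes the cancellation of the intermediate terms $M_{\gamma_i,j}$, $\mu_i<j\le\mu$, visible by inspection; what your approach buys is a cleaner conceptual picture that aligns with the Lie-algebraic remark the authors make immediately after the proposition. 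Two minor points of care: the holonomy is an anti-homomorphism in the paper's conventions (so $P_{[\gamma_1,\gamma_2]}=P_{\gamma_2}^{-1}\circ P_{\gamma_1}^{-1}\circ P_{\gamma_2}\circ P_{\gamma_1}$ rather than the order you wrote), and in your alternative route $\tilde P_{\gamma_i}^{-\epsilon}$ should be read as $(\tilde P_{\gamma_i}^{\epsilon})^{-1}$, since these maps are not one-parameter groups in $\epsilon$.
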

	Alternatively, 
	\begin{equation}
	v_{[\gamma_1,\gamma_2]}^{\mu-1}(t)\partial_t=\left[M_{\gamma_1,\mu_1}(t)\partial_t,
	M_{\gamma_2,\mu_2}(t)\partial_t\right],
	\end{equation}
	where brackets denote the Lie bracket of vector fields.

\begin{remark}
	Essentially, \eqref{def} induces a homomorphism $\mathfrak{R}$ of the 
	fundamental group 
	$\pi_1$ of $\Gamma_t$ to the group of germs at identity of analytic curves 
	in the groupoid $Diff(\tau)$. 
	
	More precisely,  for any $\gamma\in\pi_1$ we get a germ 
	$\mathfrak{R}(\gamma)$ 
	at identity of an 
	analytic curve 
	$$\left\{\hat{\omega}_\gamma\right\}=
	\left\{\tilde{P}_\gamma(\cdot,\epsilon)\right\}\subset Diff(\tau).$$
 The Lie algebra $\mathcal{X}$ of $Diff(\tau)$ "is" the Lie 
	algebra of germs at $p_0$ of vector fields on $\tau$, and $v_\gamma$ 
	defines  the corresponding (under exponential map)  path 
	$\log\hat{\omega}_\gamma$  in this Lie algebra. The path 
	$\tilde{P}_\gamma(\cdot,\epsilon)$ is not necessarily a one-parametric 
	group, and the path $\log\hat\omega_\gamma$ is not necessarily constant, 
	but we  are interested in the leading term of $\hat\omega_\gamma$ 
	only. If $\mu=1$, then the leading term is  the tangent vector 
	$M_{\gamma,1}\partial_t$ to $\hat\omega_\gamma$. However, if $\mu>1$ then 
	the 
	tangent vector is zero.

	To include the case $\mu>1$ consider  the group $\mathfrak{G}$ of germs at 
identity of 
analytic 	curves 	in the groupoid $Diff(\tau)$. $\mathfrak{G}$ has natural 
filtration by order of tangency 
of the germ to the constant germ, i.e. by the order of the 
first non-zero term in its Taylor decomposition in $\epsilon$.
 which induces a filtration on its Lie 
algebra, and  the associated 
graded algebra  is  a Lie algebra $\hat{\mathfrak{G}}$ isomorphic to 
$\mathcal{X}\otimes 
\C[[\epsilon]]$, up to a shift of grading by $1$. The homomorphism 
$\mathfrak{R}$ pulls back the above filtration of $\mathfrak{G}$ to a 
filtration of $\pi_1$, 
compatible with the group commutator (for generic perturbations this 
filtration most probably coincides with the lower central series $L_i$). 
Starting from this filtration on $\pi_1$, one can build a Lie algebra in a 
standard way,
and Proposition~\ref{prop:BCHformula} shows that $\mathfrak{R}$ lifts to a Lie 
algebra mapping between this Lie algebra 
and  $\hat{\mathfrak{G}}$.	
\end{remark}

\begin{proof}
The monodromy of $\gamma=\gamma_1\gamma_2\gamma_1^{-1}\gamma_2^{-1}$ is given 
by $P_\gamma=P_{\gamma_2}^{-1}\circ	P_{\gamma_1}^{-1}	\circ 
P_{\gamma_2}\circ P_{\gamma_1}$. 	
Denote 
$P_{\gamma_i}(t,\epsilon)=t+\epsilon^{\mu_i}M_{\gamma_i,\mu_i}+\dots+\epsilon^\mu
 M_{\gamma_i,\mu}+o(\epsilon^{\mu})$ for $i=1,2$.
Then
\begin{align}
P_{\gamma_1}\circ P_{\gamma_2}(t,\epsilon)&=P_{\gamma_2}(t,\epsilon)
+\sum_{j=\mu_1}^{\mu}\epsilon^jM_{\gamma_1,j}(P_{\gamma_2}(t,\epsilon))
+o(\epsilon^{\mu})=\nonumber\\
&=t
+\sum_{j=\mu_2}^{\mu}\epsilon^jM_{\gamma_2,j}(t)
+\sum_{j=\mu_1}^{\mu}\epsilon^jM_{\gamma_1,j}(t)
+\epsilon^\mu M_{\gamma_1,\mu_1}'M_{\gamma_2,\mu_2}
+o(\epsilon^{\mu}).\nonumber
\end{align}
Similarly, 
\begin{equation*}
P_{\gamma_2}\circ 
P_{\gamma_1}(t,\epsilon)=t
+\sum_{j=\mu_1}^{\mu}\epsilon^jM_{\gamma_1,j}(t)
+\sum_{j=\mu_2}^{\mu}\epsilon^jM_{\gamma_2,j}(t)
+\epsilon^\mu M_{\gamma_2,\mu_2}'M_{\gamma_1,\mu_1}
+o(\epsilon^{\mu}),\nonumber
\end{equation*}
and therefore
\begin{equation*}
P_{\gamma_2}\circ 
P_{\gamma_1}(t,\epsilon)=P_{\gamma_1}\circ 
P_{\gamma_2}(t,\epsilon)
+\epsilon^\mu\left(M_{\gamma_1,\mu_1}M_{\gamma_2,\mu_2}'
-M_{\gamma_1,\mu_1}'M_{\gamma_2,\mu_2}\right)
+o(\epsilon^{\mu}).
\end{equation*}
As $\left(P_{\gamma_2}^{-1}\circ P_{\gamma_1}^{-1}\right)'	=1+O(\epsilon)$,	
application of $ (P_{\gamma_2}^{-1}\circ 
P_{\gamma_1}^{-1}(s,\epsilon),\epsilon)$ provides the required equality
\begin{equation*}
P_\gamma=P_{\gamma_2}^{-1}\circ P_{\gamma_1}^{-1}\circ P_{\gamma_2}\circ 
P_{\gamma_1}
(t,\epsilon)=t
+\epsilon^\mu\left(M_{\gamma_1,\mu_1}M_{\gamma_2,\mu_2}'
-M_{\gamma_1,\mu_1}'M_{\gamma_2,\mu_2}\right)
+o(\epsilon^{\mu}).\qedhere
\end{equation*}
\end{proof}

\subsection{Explicit computations}

By Poincar\'e-Pontryagin criterion,  $M_{\sigma,1}=\int_\sigma\omega$. From 
\eqref{eq:delta eta pairing}
%
we have
\begin{align*}
(2\pi i)^{-1}\int_{\delta_1+\delta_2}\omega&=a_2(t),\\
(2\pi i)^{-1}\int_{\delta_2(t)}\omega&=a_2(t)-a_3(t), \\
(2\pi i)^{-1}\int_{\delta_2+\delta_3}\omega&=a_1(t)-a_3(t).
\end{align*}
By Proposition~\ref{prop:BCHformula} we have
	\begin{equation}
	\begin{aligned}
	(2\pi i)^{-2}M_{v_{2},2}(t)&=W(a_2(t),a_1(t)-a_3(t)),\\
	(2\pi i)^{-3}M_{v_3,3}(t)&=W\big(a_2(t),W\left(a_2(t)-a_3(t),
	a_1(t)-a_3(t)\right)\big).
	\end{aligned} \label{eq:Mv22}
	\end{equation}
	
	In what follows, the $2\pi i$ factors are 
	not important, so we will omit them.
	
	%
	%
	%
	
	%


\begin{lemma}\label{lem:ConditionsM2(v_2)=0,M_3(v_3)not0}
$M_{v_2,2}\equiv 0$ and $M_{v_3,3}\not\equiv 0$ if, and only if,
\begin{equation}
\begin{aligned}
a_3(t)&=\alpha_1\int_{0}^{t}\frac{\alpha_2(\tau)}{\alpha_1^2(\tau)}d\tau+c_0\alpha_1\\
a_1(t)&=a_3(t)+\alpha_1(t)\\
a_2(t)&=\lambda\alpha_1(t), \emph{ with }\lambda\in\C^*,
\end{aligned}
\end{equation}
where $\alpha_1(t)$ and $\alpha_2(t)$ are linearly independent functions over 
$\C$, and $\alpha_1$ is not constant.
\end{lemma}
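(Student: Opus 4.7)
The plan is to translate the two conditions on $M_{v_2,2}$ and $M_{v_3,3}$ into $\mathbb{C}$-linear (in)dependence statements about the coefficients $a_i$ via the Wronskian, and then to recover $a_3$ by inverting a first order linear ODE. The only algebraic input is that for analytic functions on a connected domain, $W(f,g)\equiv 0$ is equivalent to $\mathbb{C}$-linear dependence of $f$ and $g$.

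First, $M_{v_2,2}=W(a_2,a_1-a_3)\equiv 0$ forces $a_2$ and $a_1-a_3$ to be $\mathbb{C}$-linearly dependent. Neither factor may vanish identically: if $a_2\equiv 0$ the outer Wronskian in the expression for $M_{v_3,3}$ is zero, and if $a_1\equiv a_3$ the inner one is, contradicting $M_{v_3,3}\not\equiv 0$. Hence there is a unique $\lambda\in\mathbb{C}^*$ with $a_2=\lambda(a_1-a_3)$; setting $\alpha_1:=a_1-a_3$ gives $a_1=a_3+\alpha_1$ and $a_2=\lambda\alpha_1$, which are the first two normal-form identities.

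Substituting these into \eqref{eq:Mv22} and using bilinearity of $W$ together with $W(\alpha_1,\alpha_1)=0$, I compute
\begin{equation*}
W(a_2-a_3,\,a_1-a_3)=W(\lambda\alpha_1-a_3,\,\alpha_1)=-W(a_3,\alpha_1)=W(\alpha_1,a_3),
\end{equation*}
so that $M_{v_3,3}=\lambda\,W(\alpha_1,W(\alpha_1,a_3))$. Defining $\alpha_2:=W(\alpha_1,a_3)$, the condition $M_{v_3,3}\not\equiv 0$ becomes $W(\alpha_1,\alpha_2)\not\equiv 0$, i.e.\ $\alpha_1$ and $\alpha_2$ are $\mathbb{C}$-linearly independent.

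To recover $a_3$, I invert the relation $W(\alpha_1,a_3)=\alpha_2$ using the identity $W(\alpha_1,a_3)=\alpha_1^2(a_3/\alpha_1)'$, valid off the isolated zeros of $\alpha_1$. Integrating $(a_3/\alpha_1)'=\alpha_2/\alpha_1^2$ yields exactly
\begin{equation*}
a_3=\alpha_1\int_0^{t}\frac{\alpha_2(\tau)}{\alpha_1^2(\tau)}\,d\tau+c_0\alpha_1,
\end{equation*}
with $c_0\in\mathbb{C}$ parameterizing the one-dimensional kernel $\mathbb{C}\cdot\alpha_1$ of the map $a_3\mapsto W(\alpha_1,a_3)$. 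The converse implication is a direct substitution back into \eqref{eq:Mv22}, giving $M_{v_2,2}\equiv 0$ and $M_{v_3,3}=\lambda W(\alpha_1,\alpha_2)\not\equiv 0$. The main delicate point, and the reason one imposes that $\alpha_1$ be non-constant, is that a constant $\alpha_1$ would render both $a_2$ and $a_1-a_3$ constant, and the pair $(\alpha_1,\lambda)$ would no longer be uniquely determined by the triple $(a_1,a_2,a_3)$; excluding this degenerate case pins down the normal form. A secondary technicality is that the integration formula is stated along a contour avoiding the zeros of $\alpha_1$, but this is automatic generically on the transversal $\tau$ and does not affect the holomorphic extension of $a_3$.
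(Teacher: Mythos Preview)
Your argument is correct and follows essentially the same route as the paper's proof: deduce $a_2=\lambda(a_1-a_3)$ from $M_{v_2,2}\equiv 0$ and the non-degeneracy forced by $M_{v_3,3}\not\equiv 0$, reduce $M_{v_3,3}$ to a nested Wronskian in $\alpha_1=a_1-a_3$ and $\alpha_2=W(\alpha_1,a_3)=W(a_1,a_3)$, and invert the first-order linear relation $W(\alpha_1,a_3)=\alpha_2$ via $(a_3/\alpha_1)'=\alpha_2/\alpha_1^2$. The only difference is cosmetic: the paper phrases the inner Wronskian as $W(a_3,a_1)$ and solves the resulting system \eqref{System-alpha_1alpha_2} directly, while you use the quotient-derivative identity; both yield the same integral formula for $a_3$.

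One small remark: your justification for the clause ``$\alpha_1$ not constant'' (non-uniqueness of $(\alpha_1,\lambda)$) is not the paper's stated reason, and in fact neither explanation is airtight---a constant $\alpha_1$ with $\alpha_2$ non-constant can still satisfy $M_{v_2,2}\equiv 0$, $M_{v_3,3}\not\equiv 0$ (e.g.\ $a_1=t^2+1$, $a_3=t^2$, $a_2=1$). This is a minor looseness already present in the lemma's formulation rather than a defect in your argument.
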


\begin{proof}
From $M_{v_3,3}\not\equiv 0$ we see that $a_2(t), a_1(t)-a_3(t)\not\equiv0$.
 
 Then $M_{v_2,2}\equiv 0$ is equivalent to 
 \begin{equation}\label{a_2}
a_2(t)=\lambda_1 (a_1(t)-a_3(t)), \text{ for some }\lambda_1\in \C^*. 
\end{equation}



This implies, by linearity of Wronskians, 
\begin{equation}\label{eq:M3}
M_{v_3,3}= W\left(a_2(t),W(a_3(t),a_1(t))\right)=\lambda_1 
W\left(a_1(t)-a_3(t),W(a_3(t),a_1(t))\right).
\end{equation}
 
Then,  $M_{v_3,3}\not\equiv 0$ if, and only if,
\begin{equation}\label{CondM_3(v_3)neq0}
W(a_1,a_3)\neq \lambda_2(a_1(t)-a_3(t)),\text{ for all }\lambda_2\in\C.
\end{equation}
In other words,
\begin{equation}
\begin{aligned}
a_1(t)-a_3(t)&=\alpha_1(t)\\
a_3'(t)a_1(t)-a_1'(t)a_3(t)&=\alpha_2(t),
\end{aligned}\label{System-alpha_1alpha_2}
\end{equation}
where $\alpha_1(t)$ and $\alpha_2(t)$ are linearly independent functions over $\C$, and $\alpha_1$ is not constant, in order to get condition (\ref{CondM_3(v_3)neq0}).
\\

The solution of the system (\ref{System-alpha_1alpha_2}) is 
$$\begin{array}{rl}
a_1(t)&=a_3(t)+\alpha_1(t)\\
a_3(t)&=\alpha_1\int_{0}^{t}\frac{\alpha_2(\tau)}{\alpha_1^2(\tau)}d\tau
+c_0\alpha_1,\quad c_0\in\C.
\end{array}
$$ Substituting $a_1$ and $a_3$ in expression (\ref{a_2}) we get  $a_2(t)$.
\end{proof}

In general,    $M_{v_2,2}\equiv 0$ does not necessarily imply 
$M_{\gamma,2}(t)\equiv 0$. However,
\begin{lemma}\label{lem:M22 equivalent Mgamma2} 
	For a form $\omega$ of form \eqref{omega}, the condition 
$M_{v_2,2}(t)\equiv 0$ is equivalent to $M_{\gamma,2}(t)\equiv 0$.
\end{lemma}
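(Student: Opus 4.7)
The plan is to compute $M_{\gamma,2}(t)$ directly via Fran\c{c}oise's second-order reduction and show, by exploiting the $\Z_2\times\Z_2$-symmetry of the pair $(F,\gamma)$, that the result is a nonzero scalar multiple of $W(a_2,a_1-a_3)$, matching the formula \eqref{eq:Mv22} already obtained for $M_{v_2,2}(t)$.

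Since $M_{\gamma,1}\equiv 0$ and each $\phi_i=\log f_i$ is univalued on $\gamma$ (the loop avoids every $\{f_i=0\}$), on a tubular neighborhood of $\gamma(t)$ we have the Fran\c{c}oise decomposition $\omega=dR+S\,dF$ with $R=\sum_{i=1}^3 a_i(F)\phi_i$ and $S=-\sum_{i=1}^3 a_i'(F)\phi_i$. Applying the standard formula gives
\[
M_{\gamma,2}(t)=\int_{\gamma(t)} S\,\omega = -\sum_{i,j=1}^{3} a_i'(t)\,a_j(t)\,I_{ij}(t),\qquad I_{ij}(t):=\int_{\gamma(t)}\phi_i\,\eta_j.
\]
Since $\gamma$ is closed and $\phi_i$ univalued on it, integration by parts yields $I_{ij}=-I_{ji}$, and collecting the six off-diagonal terms into Wronskians $W(f,g):=fg'-f'g$ produces
\[
M_{\gamma,2}(t)=I_{12}(t)\,W(a_1,a_2)+I_{13}(t)\,W(a_1,a_3)+I_{23}(t)\,W(a_2,a_3).
\]

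The crucial input is the pair of involutions $\sigma\colon(x,y)\mapsto(-x,y)$ and $\tau\colon(x,y)\mapsto(x,-y)$: both preserve $F$ and leave the real loop $\gamma$ invariant as a set, and being orientation-reversing reflections of $\R^2$ they reverse its orientation. Direct computation gives $\sigma^*\eta_1=\eta_3$, $\sigma^*\eta_2=\eta_2$, $\sigma^*\phi_1=\phi_3+i\pi$ (for the branch of $\log$ fixed along $\gamma$), and analogous identities for $\tau^*$. Applying $\int_\gamma\sigma^*$ to $\phi_1\eta_2$, using $\int_\gamma\eta_2=0$ to kill the additive $i\pi$, and comparing with the orientation flip gives $-I_{12}=I_{32}=-I_{23}$, whence $I_{12}=I_{23}$. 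Similarly $\tau^*$ fixes $\phi_1\eta_3$, so the orientation flip alone yields $I_{13}=-I_{13}$, i.e.\ $I_{13}\equiv 0$. Substituting,
\[
M_{\gamma,2}(t)=I_{12}(t)\bigl(W(a_1,a_2)+W(a_2,a_3)\bigr)=-I_{12}(t)\,W(a_2,a_1-a_3),
\]
which by \eqref{eq:Mv22} is proportional to $-I_{12}(t)\,M_{v_2,2}(t)$.

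It remains to verify $I_{12}\not\equiv 0$. Near the center critical point $(0,0)\in\{F=1\}$ one has $F=1-x^2-y^2+x^2y^2$, so as $t\to 1^-$ the cycle $\gamma(t)$ is well approximated by the circle $x=\sqrt{1-t}\cos\theta$, $y=\sqrt{1-t}\sin\theta$; a short Taylor expansion of $\phi_1\eta_2=\log(1+x)\,dy/(y-1)$ along this parametrization gives $\phi_1\eta_2=-(1-t)\cos^2\theta\,d\theta+O((1-t)^{3/2})$, whence $I_{12}(t)=-\pi(1-t)+O((1-t)^{3/2})\not\equiv 0$. Therefore $M_{\gamma,2}\equiv 0$ iff $M_{v_2,2}\equiv 0$. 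The main obstacle is the careful bookkeeping of the multivalued pullbacks $\sigma^*\phi_i$ and $\tau^*\phi_i$: one must confirm that the additive $i\pi$ branch constants produced by the involutions are annihilated by the orthogonality $\int_\gamma\eta_k\equiv 0$, so the symmetry identities descend cleanly to relations among the single-valued integrals $I_{ij}$.
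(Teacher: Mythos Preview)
Your proof is correct, but it takes a genuinely different route from the paper's argument. Both begin with the Fran\c{c}oise decomposition and the expansion $M_{\gamma,2}=\sum_{i<j}W(a_i,a_j)I_{ij}$, but the paper then (i) dispatches the implication $M_{\gamma,2}\equiv0\Rightarrow M_{v_2,2}\equiv0$ trivially via $M_{v_2,2}=\Var(M_{\gamma,2})$, and (ii) for the converse \emph{assumes} $M_{v_2,2}\equiv0$, uses Lemma~\ref{lem:Cauchy} (Cauchy's theorem for forms depending on only one coordinate) to kill $I_{13}$, and then rewrites $\phi_1+\phi_3=\log(x^2-1)=\log t-\log(y^2-1)$ on $\Gamma_t$ to apply Lemma~\ref{lem:Cauchy} once more in $y$, obtaining $M_{\gamma,2}\equiv0$. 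In particular the paper never proves $I_{12}=I_{23}$ nor computes $I_{12}$.

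Your route replaces the Cauchy-type arguments by the $\Z_2\times\Z_2$ symmetry: the involutions $\sigma,\tau$ give $I_{13}=0$ and $I_{12}=I_{23}$ directly, yielding the pointwise identity $M_{\gamma,2}(t)=-(2\pi i)^{-2}I_{12}(t)\,M_{v_2,2}(t)$, from which both implications follow once you check $I_{12}\not\equiv0$. This is a stronger conclusion (an explicit proportionality rather than a conditional vanishing), at the price of the extra asymptotic computation near $t=1$. The paper's approach is slightly shorter and introduces Lemma~\ref{lem:Cauchy}, which it reuses several times later; your symmetry argument is more self-contained and makes transparent why the particular combination $W(a_2,a_1-a_3)$ is the only obstruction. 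Note that your ``nonzero scalar multiple'' in the opening line should read ``nonzero analytic multiple'', since $I_{12}$ depends on $t$.
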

\begin{proof}
Evidently, if $M_{\gamma,2}(t)\equiv 0$ then 
$M_{v_2,2}(t)=\Var(M_{\gamma,2}(t))\equiv 0$, so one implication is trivial.

Since $M_{\gamma,1}\equiv 0$  by Fran\c coise algorithm we have that 
$M_{\gamma,2}=\int_{\gamma}\omega'\omega$. 
Using integration by parts, we can rewrite $\omega$ as 
\begin{equation*}
\omega=\sum_{i=1}^{3}a_i(t)d\phi_i=
-\sum_{i=1}^{3}\phi_ia_i'(F)dF+d(\sum_{i=1}^{3}a_i(F)\phi_i),
\quad \phi_i=\log f_i.
\end{equation*} 
Denote $g=-\sum_{i=1}^{3}\phi_ia_i'(F)$ and $R=\sum_{i=1}^{3}a_i(F)\phi_i$. 
Then, $\omega'=dg$ and so $M_{\gamma,2}=\int_{\gamma(t)}g\omega$. Developing 
this expression we get

 \begin{equation}\label{M_2Francoise}
 M_{\gamma,2}=\sum_{1\le i<j\le 3}W(a_i(t),a_j(t))\int_{\gamma(t)}\phi_id\phi_j.
 \end{equation}

Next Lemma is  useful in following computations.
\begin{lemma}\label{lem:Cauchy}
	Assume that functions $f_i(x)$, $i=1,\dots,m$, are holomorphic in some 
	simply connected domain $U\subset\C_x$ containing the 
	projection $\gamma_x$ of $\gamma$ to the $x$-axis $\C_x$. Then the iterated 
	integral $\int_\gamma (f_1dx)\dots(f_mdx)$ vanishes.
\end{lemma}
\begin{proof} This integral is equal to $\int_{\gamma_x} (f_1dx)\dots(f_mdx)$, 
so this follows from Cauchy theorem.\end{proof}

By Lemma~\ref{lem:Cauchy}, 
$\int_{\gamma(t)}\phi_1d\phi_3=\int_{\gamma(t)}\log(x+1)\frac{dx}{x-1}\equiv 
0$. On the other hand, since 
$M_{v_2,2}=W\left(a_2(t),a_1(t)-a_3(t)\right)\equiv 0$, we have 
$W(a_1,a_2)+W(a_2,a_3)=0$, therefore
$$
M_{\gamma,2}=W(a_1(t),a_2(t))\left(\int_{\gamma(t)}\phi_1d\phi_2-\int_{\gamma(t)}\phi_2d\phi_3\right).
$$
Substituting $\phi_2d\phi_3$ by $-\phi_3d\phi_2+d(\phi_2\phi_3)$, and since $\int_{\gamma(t)}d(\phi_2\phi_3)=0$, we have
$$
M_{\gamma,2}=W(a_1(t),a_2(t))\left(\int_{\gamma(t)}\phi_1d\phi_2+\phi_3d\phi_2\right).
$$
Recall that $\phi_1=\log(x+1)$, $\phi_2=\log(y-1)$ and $\phi_3=\log(x-1)$, so
\begin{align*}
M_2(\gamma(t))&=W(a_1(t),a_2(t))\int_{\gamma(t)}(\log(x+1)+\log(x-1))\frac{dy}{y-1}\\
&=W(a_1(t),a_2(t))\int_{\gamma(t)}\log(x^2-1)\frac{dy}{y-1}\\
&=W(a_1(t),a_2(t)) 
\int_{\gamma(t)}\log\left(\frac{t}{y^2-1}\right)\frac{dy}{y-1}.
\end{align*}

Again, by Lemma~\ref{lem:Cauchy} this integral vanishes.
\end{proof}

\begin{proof}[Proof of Proposition~\ref{prop:pert3}]
Proposition~\ref{prop:pert3} follows from 
	Lemmas~\ref{lem:ConditionsM2(v_2)=0,M_3(v_3)not0},~\ref{lem:M22 
	equivalent Mgamma2}.
\end{proof}

\subsubsection{$M_{\gamma,2}=M_{\gamma,3}=0$ implies 
center.}\label{M2=M3=0}

\begin{proposition}\label{prop:M3=0}
	For deformation \eqref{def} with $\omega$ as in \eqref{omega}, identical 
	vanishing of both 
	$M_{\gamma,2},M_{\gamma,3}$ is equivalent to preservation of the center.
\end{proposition}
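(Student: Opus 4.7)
The implication $(\Leftarrow)$ is trivial, so I focus on $(\Rightarrow)$: assuming $M_{\gamma,2} \equiv M_{\gamma,3} \equiv 0$, I want to show that the deformation is a center. The plan proceeds in three stages.

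First, Lemma~\ref{lem:M22 equivalent Mgamma2} converts $M_{\gamma,2}\equiv 0$ into $M_{v_2,2}\equiv 0$, which by \eqref{eq:Mv22} is the Wronskian relation $W(a_2, a_1-a_3)\equiv 0$. Hence either $a_1\equiv a_3$, or $a_2 = \lambda(a_1-a_3)$ for some constant $\lambda\in\C$. In either case we write $a_2 = \lambda(a_1-a_3)$, allowing $\lambda$ to be vacuous in the degenerate branch.

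Second, to exploit $M_{\gamma,3}\equiv 0$ I iterate Fran\c{c}oise's algorithm: from $g\omega = dR_2 + g_2\,dF$ (available since $M_{\gamma,2}\equiv 0$), one obtains $M_{\gamma,3} = \int_\gamma g_2\omega$, which expands as a sum of iterated integrals of length at most three in the $\phi_i$ and $\eta_j$. Following the strategy used in Lemma~\ref{lem:M22 equivalent Mgamma2}---systematic integration by parts (legal since every product $\phi_i\phi_j$ is single-valued on $\gamma$), repeated application of Lemma~\ref{lem:Cauchy} to kill pure $dx$- or pure $dy$-iterated integrals, and use of the relation $\phi_1+\phi_2+\phi_3+\phi_4=\log t$ on $\Gamma_t$---I expect $M_{\gamma,3}$ to collapse to a single surviving universal iterated integral multiplied by the Wronskian expression $\lambda\, W(\alpha_1,\alpha_2)$ with $\alpha_1 = a_1-a_3$, $\alpha_2 = W(a_1,a_3)$. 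By \eqref{eq:Mv22} this scalar is precisely $M_{v_3,3}$, so $M_{\gamma,3}\equiv 0$ forces $\lambda\, W(\alpha_1,\alpha_2)\equiv 0$.

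Third, combining the two Wronskian constraints leaves exactly two surviving sub-cases: (a) $a_2\equiv 0$, and (b) $\lambda\neq 0$ with $\alpha_2 = c\,\alpha_1$ for some $c\in\C$. In case (a), $\omega = a_1(F)\eta_1 + a_3(F)\eta_3$ is purely in $dx$; running Fran\c{c}oise's recursion with $R_j = H_j(x,F)$ and $g_j = -\partial_F H_j$ keeps every $g_j\omega$ in pure $dx$-form, so Lemma~\ref{lem:Cauchy} forces $\int_\gamma g_j\omega = 0$ at every step and $M_{\gamma,\mu}\equiv 0$ for all $\mu$. In case (b), the linear ODE $a_1 a_3' - a_1' a_3 = c(a_1-a_3)$ integrates via $u = a_3/a_1$ to $a_3 = a_1(1 - K\,e^{-c\int dt/a_1})$; dividing the foliation equation by $a_1(F)$, I would construct an explicit Darboux-Liouvillian first integral $H_\epsilon(x,y)$ order by order in $\epsilon$, and verify that---because $\gamma$ is a small loop avoiding every locus $\{f_i = 0\}$---the logarithms appearing in $H_\epsilon$ are single-valued on a tubular neighborhood of $\gamma$, which forces $P_\gamma\equiv\mathrm{Id}$.

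The bulk of the work is Stage 2: the expansion of $M_{\gamma,3}$ contains many iterated integrals that must be grouped and cancelled using symmetry under $\phi_i\leftrightarrow\phi_j$ and the geometric relation on $\Gamma_t$, a more intricate version of the single-relation computation in Lemma~\ref{lem:M22 equivalent Mgamma2}; this is the main obstacle. Stage 3 case (b) also involves a delicate construction, but if the explicit first integral proves hard to exhibit in closed form one can instead finish by induction on $\mu$, showing directly via Fran\c{c}oise that $M_{\gamma,\mu}\equiv 0$ for all $\mu$ under the ODE relation, bypassing the need for $H_\epsilon$ itself.
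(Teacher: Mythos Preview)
Your Stage~2 contains a genuine error: the expectation that, under $M_{v_2,2}\equiv 0$, the function $M_{\gamma,3}$ collapses to $M_{v_3,3}$ times a single universal iterated integral is false. Concretely, writing $a_1-a_3=\lambda_1 a_2$ with $\lambda_1\neq 0$ (your $\lambda=\lambda_1^{-1}$) and imposing in addition $M_{v_3,3}\equiv 0$, one gets $W(a_1,a_3)=\lambda_2 a_2$ for some $\lambda_2\in\C$. For this family the paper computes
\[
M_{\gamma,3}(t)\;=\;\text{(nonzero factor)}\times\Bigl(\tfrac{\lambda_2}{\lambda_1}\Bigr)^{2}\int_\gamma d\phi_2\,d\phi_3,\qquad \int_\gamma d\phi_2\,d\phi_3\not\equiv 0.
\]
Thus $M_{v_3,3}\equiv 0$ (which only asserts that such a $\lambda_2$ exists) does \emph{not} force $M_{\gamma,3}\equiv 0$; the latter is the strictly stronger condition $\lambda_2=0$. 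In your notation $\lambda_2/\lambda_1$ is exactly your $c$, so your case~(b) with $c\neq 0$ consists of genuine non-centers, and the Darboux--Liouvillian first integral you propose cannot be single-valued near $\gamma$ there. Stage~3(b) is therefore attempting to prove something false. The analogue of Lemma~\ref{lem:M22 equivalent Mgamma2} simply fails at order three.

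The paper avoids this trap by a different maneuver. It uses only the easy implications $M_{\gamma,j}\equiv 0\Rightarrow M_{v_j,j}\equiv 0$ ($j=2,3$) to parametrize $\omega$ as above, and then performs the key step you are missing: the change of Hamiltonian $F\mapsto A(F)=\int dt/a_2$. This turns the foliation, up to orbital equivalence, into $dA+\epsilon\eta$ with $\eta=A(F)\,\alpha+d\phi$, where $\alpha=\tfrac{\lambda_2}{\lambda_1}(d\phi_1+d\phi_3)$ and $d\phi$ are \emph{closed} forms with constant coefficients. The Fran\c{c}oise computation of $\tilde M_{\gamma,3}=\int_\gamma(\eta'\eta)'\eta$ is then short ($\eta'=\alpha$, $\eta''=0$), and a few applications of Lemma~\ref{lem:Cauchy} give the displayed formula. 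The conclusion $\lambda_2=0$ makes $\alpha=0$, so the system is Hamiltonian with Hamiltonian $A(F)+\epsilon\phi$, and the center follows at once---no Darboux construction is needed.

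Two minor remarks. Your argument for case~(a) is correct, but the paper dispatches both degenerate branches $a_2\equiv0$ and $a_1\equiv a_3$ in one line via the symmetries $y\mapsto-y$ and $x\mapsto-x$ of the foliation. And your Stage~1 dichotomy silently drops the branch $a_1\equiv a_3$, $a_2\not\equiv0$, which fits neither of your Stage~3 cases as written.
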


One implication is trivial, so we assume $M_{\gamma,2}=M_{\gamma,3}\equiv0$ and 
prove that \eqref{def} preserves the center.

First, there is a trivial symmetric case.
\begin{lemma}
	If either $a_2(t)\equiv0$ or $a_1(t)-a_3(t)\equiv0$, then \eqref{def} 
	defines a 
	center.
\end{lemma}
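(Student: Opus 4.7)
The plan is to exhibit in each case an involution of $\bbC^2$ that preserves both $F$ and $\omega$ (hence the foliation $dF+\epsilon\omega=0$) and that maps $\gamma$ to itself with two fixed points, and then to use the usual symmetry-implies-trivial-holonomy argument to conclude that the Poincar\'e map along $\gamma$ is the identity.

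For the case $a_2\equiv 0$, the form is $\omega=a_1(F)\frac{dx}{x+1}+a_3(F)\frac{dx}{x-1}$. Since $F$ is even in $y$ and $\omega$ contains no $dy$ term and no explicit $y$ dependence, the involution $\sigma_1\colon(x,y)\mapsto(x,-y)$ preserves $dF+\epsilon\omega$. For the case $a_1\equiv a_3$, we have $\omega=a_1(F)\frac{2x\,dx}{x^2-1}+a_2(F)\frac{dy}{y-1}$; since $F$ is even in $x$, the first summand is invariant under $(x,y)\mapsto(-x,y)$ (the sign changes of $x$ and $dx$ cancel, and $x^2-1$ is unchanged) and the second summand involves only $y$, so $\sigma_2\colon(x,y)\mapsto(-x,y)$ preserves the foliation.

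In each case the real cycle $\gamma$ around $(0,0)$ is globally invariant under the relevant involution and meets its fixed point set (the $x$-axis for $\sigma_1$, the $y$-axis for $\sigma_2$) transversally in exactly two points $p_0,p_1$, decomposing $\gamma=\gamma_+\gamma_-$ into two arcs exchanged by $\sigma$ in the sense that $\sigma(\gamma_+)=\gamma_-^{-1}$. I would then pick transversals $\tau_0,\tau_1$ to $\gamma$ at $p_0,p_1$ lying \emph{inside} the fixed point set of the involution; this is possible because the Hamiltonian vector field of $F$ is perpendicular to this axis at $p_0,p_1$, so a small arc of the axis is a genuine transversal and is pointwise fixed by $\sigma$. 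Let $h_+\colon\tau_0\to\tau_1$ and $h_-\colon\tau_1\to\tau_0$ be the holonomies of $\gamma_\pm$ for the perturbed foliation. The combined facts that $\sigma$ sends leaves of $dF+\epsilon\omega=0$ to leaves and acts as the identity on both transversals imply $h_-=h_+^{-1}$. Consequently the Poincar\'e map $h_-\circ h_+$ of $\gamma$ is the identity, which is exactly the assertion that \eqref{def} defines a center along $\gamma$.

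I do not anticipate any real obstacle: the only technical point worth checking is the invariance of each form under the prescribed involution (a one-line computation in each case) and the transversality of the chosen axis arcs at $p_0,p_1$, which follows from the explicit form of the Hamiltonian vector field $F_y\partial_x-F_x\partial_y$ at those points. Everything else is the standard observation that a planar foliation with a reflective symmetry that fixes a transversal has trivial holonomy around any loop bisected by the symmetry axis.
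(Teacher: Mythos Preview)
Your proposal is correct and takes essentially the same approach as the paper: the paper's proof is the one-line observation that the foliation is symmetric under $y\mapsto -y$ (when $a_2\equiv 0$) or $x\mapsto -x$ (when $a_1\equiv a_3$), and you have simply spelled out the standard ``symmetry plus fixed transversal implies trivial holonomy'' argument that makes this precise. There is nothing to add.
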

\begin{proof}
Indeed, then the foliation \eqref{def} is symmetric with respect 
to the 
symmetry $y\to -y$ 
or with respect to the symmetry $x\to -x$, correspondingly.\end{proof}

Further, we assume that $a_1(t)-a_3(t), a_2(t)\not\equiv0$. 
The conditions $M_{\gamma,2}=M_{\gamma,3}=0$ imply $M_{v_2,2}=M_{v_3,3}=0$. 
From 
\eqref{eq:Mv22}\eqref{eq:M3}, 
this is equivalent to
\begin{equation}
\begin{aligned}
a_1-a_3&=\lambda_1 a_2, \quad \lambda_1\in\C^*\\
W(a_1,a_3)&=\lambda_2 a_2, \quad \lambda_2\in\C.
\end{aligned}
\end{equation}
This implies 
\begin{equation}
W(a_1,a_2)=-a_2^2\left(\frac{a_1}{a_2}\right)'=-\lambda a_2, \quad 
\lambda=\frac{\lambda_2}{\lambda_1},
\end{equation}
i.e. necessarily
\begin{equation}
\begin{aligned}
a_1&=a_2\left(\lambda\int\frac{dt}{a_2}+c_1\right)\\
a_3&=a_2\left(\lambda\int\frac{dt}{a_2}+c_1-\lambda_1\right).
\end{aligned}
\end{equation}

Denote $A(t)=\int\frac{dt}{a_2}$, so 
\begin{equation}\label{g(t)omega}
\omega=a_2(t)\Big[
A(t)\alpha
+d\phi\Big], \qquad \alpha=\lambda\big(d\phi_1+d\phi_3\big),\quad\phi=c_1 
\phi_1 + \phi_2+(c_1-\lambda_1) 
\phi_3.
\end{equation}

As  $a_2(t)=A'(t)^{-1}$, the foliation  \eqref{def} is then  equal to 
	\begin{equation}\label{dF+epsilonomega=0}
	dF+\epsilon\frac{1}{A'(F)}( A(F)\alpha+d\phi)=0,
	\end{equation}
	which is orbitally equivalent to 
	\begin{equation}\label{eq:da+epsilonomega}
dA(F)+\epsilon\eta=0, \qquad\eta= A(F)\alpha+d\phi.
	\end{equation}
	If $\lambda=0$, then $\alpha=0$, and this is a Hamiltonian system with 
	Hamiltonian $A(F)+\epsilon\phi$ (recall that $\phi$ is a holomorphic 
	function in a 
	neighborhood of  $\gamma$). Thus $\lambda=0$ implies preservation of 
	center, and it remains to prove that vanishing of $M_{\gamma,3}$ implies 
	$\lambda=0$.
	
	We consider \eqref{eq:da+epsilonomega} as a perturbation of a Hamiltonian 
	system with Hamiltonian $A(F)$.

\begin{lemma}
	The first non-zero Melnikov functions $M_{\gamma,k}$ of 
	\eqref{dF+epsilonomega=0} and the first non-zero Melnikov functions 
	$\tilde{M}_{\gamma,k}$ of \eqref{eq:da+epsilonomega} are related by 
		\begin{equation}\label{tilde(M)=gM}
	\tilde{M}_k=g'(F)M_k.
	\end{equation}
\end{lemma}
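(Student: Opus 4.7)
The plan is to observe that \eqref{dF+epsilonomega=0} and \eqref{eq:da+epsilonomega} define the \emph{same} foliation of $\C^2\times(\C_\epsilon,0)$, because their defining $1$-forms differ by multiplication by the nowhere-vanishing function $A'(F)=1/a_2(F)$ (which is non-vanishing near the chosen regular value $t_0$ since $a_2\not\equiv 0$ and $t_0$ may be taken generic). Consequently the germ of the Poincar\'e return map along $\gamma$, viewed as a set-theoretic map $\tau\to\tau$, is \emph{identical} for both deformations. What differs is only the coordinate chosen on $\tau$: \eqref{dF+epsilonomega=0} comes with the parametrization $t=F|_\tau$, while \eqref{eq:da+epsilonomega} comes with $\tilde t=A(F)|_\tau$. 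The lemma is therefore reduced to a routine change-of-variables computation.

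Concretely, let $h:\tau\to\tau$ denote the common return map. In the $t$-coordinate it reads $P_\gamma(t)=t+\epsilon^k M_k(t)+o(\epsilon^k)$, while in the $\tilde t$-coordinate $\tilde P_\gamma(\tilde t)=\tilde t+\epsilon^k \tilde M_k(\tilde t)+o(\epsilon^k)$. Since $\tilde t=A(t)$ is a local biholomorphism, the two are conjugate: $\tilde P_\gamma\circ A=A\circ P_\gamma$. Taylor-expanding the right-hand side gives
\begin{equation*}
A(P_\gamma(t))=A(t)+A'(t)\,\epsilon^k M_k(t)+o(\epsilon^k),
\end{equation*}
while the left-hand side is $\tilde P_\gamma(A(t))=A(t)+\epsilon^k \tilde M_k(A(t))+o(\epsilon^k)$. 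Matching coefficients of $\epsilon^k$ yields $\tilde M_k(A(t))=A'(t)\,M_k(t)$, which, once both Melnikov functions are viewed as functions of $F$ (identifying $g$ with $A$), is precisely \eqref{tilde(M)=gM}.

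The fact that the first non-vanishing term occurs at the same order $k$ in both expansions is automatic: if $P_\gamma(t)-t=O(\epsilon^k)$ then $A(P_\gamma(t))-A(t)=A'(t)(P_\gamma(t)-t)+\cdots=O(\epsilon^k)$, and conversely since $A'\neq 0$. Thus there is no substantive obstacle; the only point worth underlining is conceptual rather than technical, namely that the Melnikov functions are \emph{not} invariants of the foliation alone but depend on the chosen parametrization of the transversal, which is why two orbitally equivalent deformations carry distinct -- but simply related -- Melnikov functions.
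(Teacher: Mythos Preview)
Your argument is correct and follows essentially the same approach as the paper: both observe that the two systems define the same foliation and differ only by reparametrizing the transversal via $\tilde t=A(t)$, then obtain the relation by a chain-rule/Taylor expansion of $A\circ P_\gamma$. Your write-up is more detailed (making the conjugacy $\tilde P_\gamma\circ A=A\circ P_\gamma$ and the equality of vanishing orders explicit), but the underlying idea is identical.
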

\begin{proof}
	Indeed, the passage from \eqref{dF+epsilonomega=0} to  
\eqref{eq:da+epsilonomega} amounts to reparameterization of the transversal 
$\tau$ by values of $A(F)$. For a point $p\in\tau$ 
with $F(p)=t$,
\begin{equation*}
\tilde{M}_{\gamma,k}=\frac{\partial 
A(F(P(\epsilon,p)))}{\partial\epsilon^k}=A'(t)\frac{\partial 
F(P(\epsilon,p))}{\partial\epsilon^k}=A'(t)M_{\gamma,k}.\qedhere
\end{equation*}\end{proof}
\begin{remark}
	In other words, the first non-zero Melnikov function has tensor type of a 
	vector field, which is expected from Proposition~\ref{prop:BCHformula} and 
	the 
	following Remark.
\end{remark}

\begin{proof}[Proof of Proposition~\ref{prop:M3=0}]

We will compute $\tilde{M}_{\gamma,3}$.
	By Fran\c coise's algorithm \cite{F,G}, we have that 
	\begin{equation}
	\tilde{M}_{\gamma,3}=\int_{\gamma}(\eta'\eta)'\eta,
	\end{equation}
	where the Gelfand-Leray derivative is taken with respect to the Hamiltonian 
	$A(F)$.
	Developing the derivative, 
	\begin{equation}
	\tilde{M}_{\gamma,3}=\int_{\gamma}\eta''\eta\eta+\int_{\gamma}\eta'\eta'\eta
	+\int_{\gamma}\frac{\eta'\wedge\eta}{dA}\eta,
	\end{equation}
	where $\eta'=\frac{d(A\alpha+d\phi)}{dA}=\alpha$, and $\eta''=0$, as the 
	forms $\alpha,d\phi$ are closed. Then, 
\begin{equation}
\tilde{M}_{\gamma,3}=\int_{\gamma}\alpha\alpha(A\alpha+d\phi)+
\int_{\gamma}\frac{\alpha\wedge(A\alpha+d\phi)}{dA}\eta.	
\end{equation}
By Lemma~\ref{lem:Cauchy} the triple integrals 
$\int_\gamma\alpha\alpha d\phi_1,\int_\gamma\alpha\alpha d\phi_3$ 
vanish. Also, as 
    \begin{equation}\label{eq:alpha in y}
    \alpha=\lambda d\log F -\lambda d\log(y^2-1),
    \end{equation} 
Lemma~\ref{lem:Cauchy} implies that the integral 
$\int_\gamma\alpha\alpha d\phi_2$ vanishes. 

Hence, 
	\begin{equation}
	\tilde{M}_{\gamma,3}=\int_{\gamma}\frac{\alpha\wedge(A\alpha+d\phi)}{dA}\eta=
	\int_{\gamma}\frac{\alpha\wedge d\phi}{dA}\eta.
	\end{equation}
	
We have 
	$\alpha\wedge d\phi=\alpha\wedge d\phi_2$. By 
	\eqref{eq:alpha in y}, we have
	$\alpha\wedge d\phi_2=\frac{\lambda dF}{F}\wedge d\phi_2$, so 
\begin{equation}
	\frac{\alpha\wedge d\phi}{dA}=\frac{\lambda\frac{dF}{F}\wedge 
	d\phi_2}{A'(F)dF}=\frac{\lambda}{FA'(F)}d\phi_2,
\end{equation}
and, as $\eta=A(F)\alpha+d\phi$, 
     \begin{equation} 	
     \tilde{M}_{\gamma,3}=\frac{\lambda}{tA'(t)}\int_{\gamma}d\phi_2\eta
     =\frac{\lambda}{A'(t)}\int_{\gamma}d\phi_2\alpha+\frac{\lambda}{tA'(t)}\int_{\gamma}d\phi_2
      d\phi.
     \end{equation} Again, 
	$\int_{\gamma}d\phi_2\alpha\equiv 0$ by \eqref{eq:alpha in y} and 
	Lemma~\ref{lem:Cauchy}. Moreover,
	$$
	\int_{\gamma}d\phi_2 d\phi=\int_{\gamma}d\phi_2d\phi_2
	+\frac{c_1}{\lambda}\int_{\gamma}d\phi_2\alpha
	-\lambda\int_{\gamma}d\phi_2d\phi_3=-\lambda\int_{\gamma}d\phi_2d\phi_3,
	$$
	 as $\int_{\gamma}d\phi_2d\phi_2\equiv 0$ by Lemma~\ref{lem:Cauchy}. 
	 Therefore
\begin{equation}
\tilde{M}_{\gamma,3}=\frac{-\lambda^2}{tA'(t)}\int_{\gamma}d\phi_2d\phi_3. 
\end{equation}
We claim that
\begin{equation}\label{eq:INTdf2df3not0}
	\int_{\gamma}d\phi_2d\phi_3\not\equiv 0.
	\end{equation}
Indeed, 
	from  $\int_\gamma d\phi_i\equiv0$ and 
	$\int_{[\sigma_1,\sigma_2]}\omega_1\omega_2
	=\det\left\{\int_{\sigma_i}\omega_j\right\}_{i,j=1}^2$, we see that $\int 
	d\phi_2 d\phi_3$ vanishes on $K$, and, therefore, defines a linear 
	functional on $(O/K)^*$. Therefore, 
	\begin{equation*}
	Var^2\int_{\gamma}d\phi_2d\phi_3=\int_{v_2}d\phi_2d\phi_3
	=\det
	\begin{pmatrix}
	\begin{array}{cc}
\int_{\delta_1+\delta_2}d\phi_2	&\int_{\delta_1+\delta_2}d\phi_3  \\ 
\int_{\delta_2+\delta_3}d\phi_2	&\int_{\delta_2+\delta_3}d\phi_3
	\end{array} 
	\end{pmatrix}=4\pi^2\end{equation*}
 by \eqref{eq:delta eta pairing}, which proves \eqref{eq:INTdf2df3not0}.

Thus $\tilde{M}_{\gamma,3}$, as well as  ${M}_{\gamma,3}$, vanish identically 
only if $\lambda=0$, which finishes the proof of 
Proposition~\ref{prop:M3=0}.\end{proof}
\begin{proof}[Proof of Theorem~\ref{thm:M3}]
	Take $\omega$ as in \eqref{eq:omega type}, with coefficients $a_i(F)$ as in 
	Proposition~\ref{prop:pert3}. The first Melnikov function $M_{\gamma,1}$ 
	vanishes identically for all $\omega$ of this type. Also, $M_{\gamma,2}$ 
	vanishes by Proposition~\ref{prop:pert3}. Therefore $M_{\gamma,3}$ is the 
	first non-zero Melnikov function of \eqref{def} or it is identically zero. 
	In 	both cases, it is linear on the orbit, see \cite{GI,MNOP}, and 
	therefore 	$Var^3\left(M_{\gamma,3}(t)\right)=M_{v_3,3}(t)\not=0$. 
	Therefore 	$M_{\gamma,3}(t)\not=0$, and, moreover, has length three.
	Taking $\alpha_1=t$, $\alpha_2=t^2$ and $c_0=\lambda=1$, one gets the 
	example of Theorem~\ref{thm:M3}.

	The last statement of Theorem~\ref{thm:M3} follows from 
	Proposition~\ref{prop:M3=0}.\end{proof}

\subsection{Length bigger than 4.}\label{Lengthbiggerthan4}

Now,  for deformation \eqref{def} consider the functions $M_{v_i,i}$, where 
$v_i$ were defined in 
Proposition~\ref{prop:variations}. Note that  $M_{v_i,j}$, given by iterated 
integrals of length at most $j$, necessarily vanish on $v_i\in L_i$ for $j<i$, 
so $M_{v_i,i}$ are (generically) the first non-zero Melnikov functions of 
$v_i$  with respect to the 
deformation \eqref{def}.
\begin{lemma}
The condition $M_{v_2,2}=M_{v_3,3}=0$ implies 
	$M_{v_i,i}=0$ for all $i\ge 4$.
\end{lemma}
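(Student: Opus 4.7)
The plan is to apply Proposition~\ref{prop:BCHformula} iteratively along the nested commutator presentation $v_k=[x,d_{k-1}(z)]$ and $d_k(z)=[\delta_2,d_{k-1}(z)]$ from Proposition~\ref{prop:variations}. Using the pairings from \eqref{eq:delta eta pairing}, set
\begin{equation*}
A=M_{x,1}=2\pi i\,a_2,\qquad B=M_{\delta_2,1}=2\pi i(a_2-a_3),\qquad C=M_{z,1}=2\pi i(a_1-a_3).
\end{equation*}
When $A,B,C\not\equiv 0$, Proposition~\ref{prop:BCHformula} yields, at the orders of first non-vanishing,
\begin{equation*}
M_{d_1(z),1}=C,\qquad M_{d_k(z),k}=W(B,M_{d_{k-1}(z),k-1}),\qquad M_{v_k,k}=W(A,M_{d_{k-1}(z),k-1}).
\end{equation*}

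Assuming this generic setting, the identity $M_{v_2,2}=W(A,C)=0$ together with $A\not\equiv 0$ forces $C=\lambda_1 A$ for a constant $\lambda_1\in\C^*$ (the $\lambda_1=0$ sub-case is treated below). Then $M_{d_2(z),2}=\lambda_1 W(B,A)$, so the vanishing $M_{v_3,3}=\lambda_1 W(A,W(B,A))=0$ forces $W(B,A)=cA$ for a constant $c\in\C$. A straightforward induction now shows $M_{d_k(z),k}=c^{k-1}\lambda_1\,A$ for all $k\ge 1$: the base case is $C=\lambda_1 A$, and the inductive step is
\begin{equation*}
M_{d_{k+1}(z),k+1}=W\bigl(B,\,c^{k-1}\lambda_1 A\bigr)=c^{k-1}\lambda_1\cdot cA=c^k\lambda_1 A.
\end{equation*}
Consequently $M_{v_{k+1},k+1}=W(A,\,c^{k-1}\lambda_1 A)=0$ for every $k\ge 1$, which yields the conclusion in the generic case. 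The essential structural observation is the invariant that each $M_{d_{k-1}(z),k-1}$ remains proportional to $A$ with a \emph{constant} coefficient, so that the Wronskian with $A$ vanishes automatically.

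The main obstacle is the careful treatment of the degenerate configurations in which one of $A$, $B$, $C$, or $c$ vanishes identically and Proposition~\ref{prop:BCHformula} no longer applies with the expected leading orders. In each such situation the extra vanishing simply pushes up the order of first non-vanishing in the nested commutators, and the lower bound $\mu_{[\sigma,\tau]}\ge\mu_\sigma+\mu_\tau$ implicit in Proposition~\ref{prop:BCHformula} is enough to conclude: if $A\equiv0$ then $\mu_x\ge2$ gives $\mu_{v_k}\ge k+1$; if $\lambda_1=0$ then $\mu_z\ge2$ gives $\mu_{d_k(z)}\ge k+1$ and hence $\mu_{v_k}\ge k+1$; if $B\equiv0$ then $\mu_{\delta_2}\ge2$ gives $\mu_{d_k(z)}\ge 2k-1$ and $\mu_{v_k}\ge 2k-2$; finally if $c=0$ then $M_{d_2(z),2}=0$ forces $\mu_{d_k(z)}\ge k+1$ for $k\ge2$. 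In every case $\mu_{v_i}>i$ for all $i\ge 4$, so that $M_{v_i,i}\equiv 0$, as required.
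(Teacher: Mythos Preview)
Your proof is correct and follows essentially the same route as the paper's: both identify $A=\beta_1$, $B=\beta_2$, $C=\beta_3$ as the first Melnikov functions along $x,\delta_2,z$, invoke Proposition~\ref{prop:BCHformula} along the nested commutator structure of $v_k$ to express $M_{v_k,k}$ as an iterated Wronskian $W(\beta_1,W(\beta_2,\dots,W(\beta_2,\beta_3)\dots))$, and then use the two hypotheses to force first $\beta_1,\beta_3$ proportional and then $W(\beta_2,\beta_3)$ proportional to $\beta_3$, whence the whole tower collapses. Your treatment of the degenerate cases via the order inequality $\mu_{[\sigma,\tau]}\ge\mu_\sigma+\mu_\tau$ is more explicit than the paper's (which simply declares those cases ``trivial'' or ``evident''), but the substance is identical.
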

\begin{remark}
	Vanishing of $M_{v_i,i}$ is necessary for vanishing of $M_{\gamma,i}$ 
	(i.e.  follows from center conditions), but not sufficient, see for example 
	Proposition~\ref{prop:M3=0}.
\end{remark}
\begin{proof}
	
		Denote $\beta_1=\int_{\delta_1+\delta_2}\omega$, 
	$\beta_2=\int_{\delta_2}\omega$ and 
	$\beta_3=\int_{\delta_2+\delta_3}\omega$. 
	By Proposition~\ref{prop:BCHformula}, 	
\begin{equation*}
	\begin{aligned}
	M_{v_2,2}&=W(\beta_1,\beta_3)\\
	M_{v_3,3}&=W(\beta_1,W(\beta_2,\beta_3))\\
	M_{v_4,4}&=W(\beta_1,W(\beta_2,W(\beta_2,\beta_3)))\\
	\vdots\\
	M_{v_i,i}&=W(\beta_1,W(\beta_2,\dots,W(\beta_2,\beta_3))\dots).
	\end{aligned}.
\end{equation*}

	Suppose $M_{v_2,2}\equiv 0$. If $\beta_3\equiv0$ then evidently all these 
	Wronskians vanish. Otherwise, $\beta_1=\lambda_1\beta_3$, for some 
	$\lambda\in 
	\C$.
	
	Suppose also that $M_3(v_3)\equiv 0$. Again, the case $\beta_1\equiv0$ is 
	trivial. Otherwise,  $W(\beta_2,\beta_3)=\lambda_2\beta_1$, for some 
	$\lambda_2\in \C$, and therefore
	$$W(\beta_2,\beta_3)=\lambda_1\lambda_2\beta_3,$$ 
	which implies 
	$$
	M_{v_i+1,i+1}=\lambda_1\lambda_2M_{v_i,i}=\dots
	=(\lambda_1\lambda_2)^{i-2}M_{v_3,3}\equiv0.\qedhere
	$$
\end{proof}

\end{document}